\newcommand{\B}{\mathfrak{B}}
\newcommand{\C}{\mathcal}
\newcommand{\BB}{\mathbb}
\spnewtheorem{assumption}{Assumption}{\bf}{\rm}
\spnewtheorem*{proofx}{}{\it}{\rm}
\journalname{}
\begin{document}

\title{Kolmogorov's Equations for Jump Markov Processes with Unbounded Jump Rates \thanks{The first author was partially supported by the National Science Foundation [Grants CMMI-1335296 and   CMMI-1636193.].}}

\author{Eugene Feinberg \and Manasa Mandava \and Albert N. Shiryaev}

\institute{E. Feinberg
			\at Department of Applied Mathematics and Statistics, Stony Brook University,  NY 11794-3600, USA \\
              Tel.: +1-631-632-7189, Fax: +1-631-632-8490\\
              \email{Eugene.Feinberg@stonybrook.edu}           %  \\
%             \emph{Present address:} of F. Author  %  if needed
           \and
           M. Mandava
           \at Indian School of Business, Hyderabad 500032, India
           \and
           A. N. Shiryaev
           \at Steklov Mathematical Institute, 8, Gubkina Str., Moscow 119991, Russia
}

\date{Received: date / Accepted: date}
% The correct dates will be entered by the editor

\titlerunning{Kolmogorov's equations for jump Markov processes}
\authorrunning{Feinberg et al.}

\maketitle

\begin{abstract}
As is well-known, transition probabilities of jump Markov processes satisfy Kolmogorov's backward and forward equations.  In the seminal 1940 paper, William Feller investigated solutions of Kolmogorov's equations for jump Markov processes.  Recently the authors solved the problem studied by Feller and showed that the minimal solution of Kolmogorov's backward and forward  equations is the transition probability of the corresponding jump Markov process if the transition rate at each state is bounded.  This paper presents more general results.  For  Kolmogorov's backward equation, the sufficient condition for the described property of the minimal solution is that the transition rate  at each state is locally integrable, and for Kolmogorov's forward equation the corresponding sufficient condition is that the transition rate at each state is locally bounded.
\end{abstract}

\keywords{Jump Markov process \and Kolmogorov's equation \and  minimal solution \and boundedness condition \and transition function \and unbounded transition rates}
%\end{keywords}

\section{Introduction}
Continuous-time jump Markov processes are broadly used in stochastic models of operations research. In many applications continuous-time jump Markov processes are defined by transition rates often called $Q$-functions.   Each $Q$-function defines Kolmogorov's backward and forward equations,  and transition  probabilities of the  jump Markov process defined by  the $Q$-function  satisfy these equations. %MM08042015
If transition rates are unbounded,  Kolmogorov's equations may have multiple solutions, see, e.g., Anderson~\cite[Chap.~4, Example~1.2]{Anderson}, Doob~\cite[Chap. 6]{Doob}, Kendall~\cite{Ken}, Reuter~\cite{Reu},  and the relation between Kolmogorov's equations and the corresponding transition probabilities is not trivial. %{\bf Examples when transition %rates are unbounded include birth and death processes. }
For example, in queueing theory birth and death processes have unbounded transition rates in each of the following three situations:  arrival rates depend on the state of the system and are unbounded, queues with an infinite number of servers, queues with reneging.

This paper answers the  questions on how a nonhomogeneous jump Markov process can be defined for a given  $Q$-function and how can its transition probability be found as a solution of Kolmogorov's backward and forward equations. These questions were studied by Feller~\cite{Fel} for continuous $Q$-functions and a standard Borel state space, by Ye et al.~\cite{YGHL}  for measurable $Q$-functions and a countable state space, and by Feinberg et al.~\cite{FMS} for measurable $Q$-functions and a standard Borel state space.  All these papers considered  $Q$-functions satisfying certain boundedness conditions. This paper generalizes the results from Feinberg et al.~\cite{FMS} to more general classes of unbounded $Q$-functions, strengthens some of results from \cite{FMS}, and provides proofs of the following two facts:  (i) (Lemma~\ref{l:A-eq}(a)) Fellers's assumption on the boundedness of a $Q$-function, Assumption~\ref{Feller}, is equivalent to the boundedness of a $Q$-function at each state, Assumption~\ref{LB}, and (ii) (Theorem~\ref{thm:intFKE}) Kolmogorov's forward equation is equivalent to the integral  equation~\eqref{int-FKE}.  The  first fact is  introduced  and the validity of equation~\eqref{int-FKE} is stated in \cite{FMS} without detailed proofs.  %,} and this is done without detailed proofs. % The {\bf fact, that} equation~\eqref{int-FKE} implies  Kolmogorov's forward equation, is not used in the proofs in this paper and in \cite{FMS}, but it seems interesting on its own.

For a topological space $S,$ its Borel $\sigma$-field (the $\sigma$-field generated by  open subsets of $S$) is always denoted by $\B(S),$ and the sets in $\B(S)$ are called {\it Borel subsets} of $S$. Let $\BB{R}$ be the real line endowed with the Euclidean metric.   A topological space $(S, \B(S))$ is called a {\it standard Borel space} if there exists a bijection $f$ from $(S, \B(S))$ to a Borel subset of $\BB{R}$ such that the mappings $f$ and $f^{-1}$ are measurable. In this paper,
measurability and Borel measurability are used synonymously.
 Let $({\bf X}, \B({\bf X}))$ be a standard Borel space, called the state space, and let $[T_0, T_1[$ be a finite or an infinite interval in $\BB{R}_+:= [0, \infty[$.  In this paper, we always assume that $T_0< T_1.$
A function $P(u, x; t, B)$, where $u \in [T_0, T_1[$, $t \in ]u, T_1[$, $x \in {\bf X},$ and $B \in \B({\bf X}),$ is called  a
transition function if it takes values in $[0,1]$
and satisfies the following properties:
\begin{itemize}
\item[(i)] for all $u,x,t$ the function $P(u,x;t,\cdot)$ is a
measure on $({\bf X}, \mathfrak{B}({\bf X}))$;
\item[(ii)]for all $B$ the
function $P(u,x; t, B)$ is Borel measurable in $(u,x,t);$
\item[(iii)]
$P(u,x;t,B)$ satisfies the Chapman-Kolmogorov equation
\begin{equation}
\label{CKE} P(u, x; t, B) = \int_{{\bf X}} P(s,y; t, B)P(u, x; s, dy),
\qquad u < s < t.
\end{equation}
\end{itemize}
A transition function $P$ is called  {\it
regular} if $P(u,x;t,{\bf X}) = 1$ for all $u,x,t$ in the domain of $P$.

A stochastic process $\{\BB{X}_t: t \in [T_0, T_1[\}$ with values in ${\bf X}$,
defined on the probability space $(\Omega, \C{F}, \BB{P})$ and adapted to the
filtration $\{\C{F}_t\}_{t \in [T_0, T_1[}$, is called Markov if
$\BB{P}(\BB{X}_t \in B \mid \C{F}_u) = \BB{P}(\BB{X}_t \in B \mid
\BB{X}_u)$, $\BB{P}-a.s.$ for all $u \in [T_0, T_1[,$ $t \in ]u,T_1[$, and  $B
\in \B({\bf X})$. Each Markov process has a transition function $P$ such
that $\BB{P}(\BB{X}_t \in B \mid \BB{X}_u) = P(u,\BB{X}_u; t, B),$
$\BB{P}-a.s.$; %for all $u,t \in \BB{R}_+$ with $u < t$ and $B \in \B({\bf X})$
see Kuznetsov~\cite{Kuz}, where the equivalence of two
definitions of a Markov process given by Kolmogorov~\cite{Kol}
is established. In addition, if a Markov process is a jump process, that is, if each sample path of the process is a right-continuous piecewise constant function in $t$ that has a finite or countable number of discontinuity points on $t \in [T_0, T_1[$, then the Markov
process is called a {\it jump Markov process}.

A function $q(x,t,B)$, where $x \in {\bf X}$, $t \in [T_0, T_1[$, and $B \in \B({\bf X})$,
is called a {\it Q-function} if it satisfies the following
properties:
\begin{itemize}
\item[(a)]for all $x,t$ the function $q(x,t,\cdot)$ is a signed
measure on $({\bf X}, \mathfrak{B}({\bf X}))$ such that $q(x,t,{\bf X})$ $\leq$ $0$ and
$0 \leq q(x, t, B \setminus \{x\}) < \infty$ for all $B \in
\B({\bf X})$;
\item[(b)] for all $B$ the
function $q(x,t,B)$ is measurable in $(x,t).$
\end{itemize}
In addition to properties (a) and (b), if $q(x,t,{\bf X}) =0$ for all
$x,t$, then the $Q$-function $q$ is  called {\it conservative}.
Note that any $Q$-function can be transformed into a conservative
$Q$-function  by adding an absorbing state $\bar{x}$ to ${\bf X}$ with
$q(x,t,\{\bar{x}\}):= -q(x,t,{\bf X})$, $q(\bar{x}, t, {\bf X}) := 0$, and
$q(\bar{x}, t, \{\bar{x}\}) := 0$, where $x \in {\bf X}$ and $t \in
[T_0, T_1[$. To simplify the presentation, in this paper we
always assume that $q$ is conservative. The same arguments as in Remark~4.1 in Feinberg et al.~\cite{FMS} explain how the main formulations change when the $Q$-function $q$ is not conservative. A $Q$-function $q$ is called {\it continuous}
if it is continuous in $t \in [T_0, T_1[$.

Feller~\cite{Fel} studied Kolmogorov's backward and forward equations for continuous
$Q$-functions and provided explicit formulae for a
transition function that satisfies Kolmogorov's backward and forward equations. If the constructed transition function is
regular, Feller~\cite[Theorem 3]{Fel} showed that this transition
function is the unique solution of Kolmogorov's backward equation. Though Feller~\cite{Fel} focused on regular
transition functions,  it follows from the proof of Theorem 3 in
Feller~\cite{Fel} that the  transition function constructed there
is the minimal solution of Kolmogorov's backward equation. Feinberg et al.~\cite{FMS} showed  for a measurable $Q$-function that the transition function constructed by Feller~\cite{Fel} is the minimal solution of Kolmogorov's backward and forward equations, and it is the transition function of the jump Markov process defined by the random measure whose compensator is defined via the $Q$-function. In this paper, we show that the  minimal solution of Kolmogorov's backward and forward  equations is the transition function of the corresponding jump Markov process under more general boundedness assumptions on $Q$-functions than those assumed in  \cite{FMS}. %by Feinberg et al.~\cite{FMS}.

\section{Assumptions and description of  main results}
\label{S-A}
In this section, we describe several assumptions on unbounded $Q$-functions and the results of this paper.  Let $q(x,t) := -q(x, t, \{x\})$ for $x \in {\bf X}$ and $t \in [T_0, T_1[,$ and let ${\bar q}(x):= \sup_{t \in [T_0, T_1[} q(x,t)$ for $x\in {\bf X}.$
%Consider the following  assumptions on boundedness of $q$ in $t$.
 Feller~\cite{Fel} studied Kolmogorov's equations for continuous $Q$-functions under the following assumption.
\begin{assumption}[{\it Feller's assumption}]
\label{Feller}
There exists Borel subsets $B_n, n = 1,2,\ldots,$ of ${\bf X}$ such that $\sup_{x \in B_n} {\bar q}(x) < n$ for all $n = 1,2,\ldots$  and $B_n \uparrow {\bf X}$ as $n \to \infty$.
\end{assumption}

Feinberg et al.~\cite{FMS} studied Kolmogorov's equations for measurable $Q$-functions under the following assumption with $T_0 = 0$ and $T_1 = \infty$.
\begin{assumption}[{\it Boundedness of $q$}]
\label{LB}
${\bar q}(x) < \infty $ for each $x \in {\bf X}$.
\end{assumption}

As mentioned in Feinberg et al.~\cite[p.~262]{FMS}, Assumptions~\ref{Feller} and \ref{LB} are equivalent; see Lemma~\ref{l:A-eq}(a) for details. In this section, we introduce two more general assumptions.

\begin{assumption}[{\it Local boundedness of $q$}]
\label{ALB}
$\sup_{t\in [T_0,s[} q(x,t)<\infty$ for each $s\in ]T_0,T_1[$ and  $x\in {\bf X}.$
\end{assumption}

\begin{assumption}[{\it Local $\mathcal{L}^1$ boundedness of $q$}]
\label{L1}
$\int_{T_0}^s q(x,t) dt < \infty$  for each $s \in ]T_0, T_1[$ and  $x \in {\bf X}.$
\end{assumption}

The following lemma compares Assumptions~\ref{Feller}--\ref{L1}.

\begin{lemma}
\label{l:A-eq}
The following statements hold for a measurable $Q$-function $q:$

(a) Assumptions~\ref{Feller} and \ref{LB} are equivalent;

(b) Assumption~\ref{LB} implies Assumption~\ref{ALB};

(c) Assumption~\ref{ALB} implies Assumption~\ref{L1}.
\end{lemma}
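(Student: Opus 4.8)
The plan is to read off (b), (c), and the easy direction of (a) directly from the definitions, and to concentrate all of the work in the implication Assumption~\ref{LB} $\Rightarrow$ Assumption~\ref{Feller}. For (b), observe that $\sup_{t\in[T_0,s[}q(x,t)\le\sup_{t\in[T_0,T_1[}q(x,t)={\bar q}(x)<\infty$ for every $s\in\,]T_0,T_1[$ and $x\in{\bf X}$ under Assumption~\ref{LB}. For (c), fix $s\in\,]T_0,T_1[$ and $x\in{\bf X}$ and put $M:=\sup_{t\in[T_0,s[}q(x,t)$, which is finite by Assumption~\ref{ALB}; since $q$ is conservative, property~(a) gives $q(x,t)=-q(x,t,\{x\})=q(x,t,{\bf X}\setminus\{x\})\ge 0$, so $\int_{T_0}^s q(x,t)\,dt=\int_{[T_0,s[}q(x,t)\,dt\le M(s-T_0)<\infty$, the point $\{s\}$ being Lebesgue-null and $t\mapsto q(x,t)$ measurable. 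For the easy direction of (a), note that $B_n\uparrow{\bf X}$ forces every $x$ into some $B_n$, whence ${\bar q}(x)\le\sup_{y\in B_n}{\bar q}(y)<n<\infty$, which is Assumption~\ref{LB}.

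The substance of the lemma is thus Assumption~\ref{LB} $\Rightarrow$ Assumption~\ref{Feller}. The naive choice $B_n:=\{x\in{\bf X}:{\bar q}(x)\le n-1\}$ increases to ${\bf X}$ under Assumption~\ref{LB} and satisfies $\sup_{x\in B_n}{\bar q}(x)\le n-1<n$, so it would finish the proof were the sets $B_n$ Borel. I expect this Borel measurability to be the main obstacle, and it genuinely fails for a merely measurable $q$. Indeed, writing $q$ also for the function $(x,t)\mapsto q(x,t)=q(x,t,{\bf X}\setminus\{x\})$, which is jointly Borel measurable on the standard Borel space ${\bf X}\times[T_0,T_1[$ (the measurability of this diagonal value of the kernel being a standard fact; cf.~\cite{FMS}), the set $\{(x,t):q(x,t)>m\}$ is Borel, and hence its ${\bf X}$-projection
$$A_m:=\{x\in{\bf X}:{\bar q}(x)>m\}=\{x\in{\bf X}:\exists\,t,\ q(x,t)>m\}$$
is only analytic. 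Consequently ${\bar q}$ is upper semianalytic but need not be Borel, its sublevel sets are co-analytic, and the naive $B_n$ are unavailable.

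To circumvent this I would retain the analytic superlevel sets $A_m$, $m=1,2,\dots$, and observe that Assumption~\ref{LB} is precisely the statement $\bigcap_m A_m=\{x:{\bar q}(x)=\infty\}=\emptyset$. The tool that makes everything work is Novikov's generalized first separation theorem: a sequence of analytic sets with empty intersection can be enclosed in Borel sets with empty intersection. It furnishes Borel sets $G_m\supseteq A_m$ with $\bigcap_m G_m=\emptyset$; replacing $G_m$ by $\bigcap_{k\le m}G_k$ (still Borel, and still $\supseteq A_m$ since the $A_m$ decrease) I may assume $G_m\downarrow\emptyset$. Setting $G_0:={\bf X}$ and $B_n:=G_{n-1}^{\,c}$ then produces Borel sets that increase to $\bigl(\bigcap_m G_m\bigr)^{c}={\bf X}$ and satisfy $B_n\subseteq\{x:{\bar q}(x)\le n-1\}$, so that $\sup_{x\in B_n}{\bar q}(x)\le n-1<n$; this is exactly Assumption~\ref{Feller}, completing (a). The one point that must be checked with care is the analyticity of the $A_m$, which is where the joint measurability of $(x,t)\mapsto q(x,t)$ and the standard Borel structure of ${\bf X}$ are used.
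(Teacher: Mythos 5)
Your proposal is correct and follows essentially the same route as the paper: parts (b), (c), and the easy direction of (a) are immediate, and the implication from Assumption~\ref{LB} to Assumption~\ref{Feller} is obtained by observing that the superlevel sets of ${\bar q}$ are analytic (as projections of Borel sets), applying the Novikov separation theorem to enclose them in Borel sets with empty intersection, and complementing. The only differences are cosmetic: you use strict superlevel sets so each is a single projection rather than a countable intersection of projections, and you monotonize the separating sets before complementing instead of taking unions of complements afterwards.
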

\begin{proof}
(a) Let $\{B_n, n = 1,2,\ldots\}$ be a sequence of Borel subsets of ${\bf X}$ satisfying the properties stated in Assumption~\ref{Feller}. Then for each $x \in {\bf X}$ there exists an $n \in \{1,2,\ldots\}$ such that $x \in B_n$ and therefore ${\bar  q}(x) < n$. Thus, Assumption~\ref{Feller} implies Assumption~\ref{LB}. To prove that Assumption~\ref{LB} implies Assumption~\ref{Feller}, define $C_n := \{x \in {\bf X}:  {\bar q}(x) \ge n\}, n = 1,2,\ldots\ .$  Since $C_n =\cap_{k=1}^\infty proj_{\bf X} (\{(x,t) \in ({\bf X} \times [T_0, T_1[) : q(x,t) \ge n-k^{-1}\})$ are countable intersections of projections of Borel sets, the sets $C_n$ are analytic for all $n = 1,2,\ldots;$ see Bertsekas and Shreve~\cite[Proposition~7.39 and Corollary~7.35.2]{BS}. In addition, Assumption~\ref{LB} implies  that $\bigcap_{n = 1}^\infty C_n = \emptyset.$ Thus, in view of the Novikov separation theorem, Kechris~\cite[Theorem~28.5]{Kechris}, there exist Borel subsets $Z_n,$ $n=1,2,\ldots,$ of ${\bf X}$ such that $C_n \subseteq Z_n$  and $\bigcap_{n = 1}^\infty Z_n = \emptyset.$ This fact implies that $Z_n^c \subseteq C_n^c$ and $\bigcup_{n = 1}^\infty Z_n^c = {\bf X},$ where the sets $Z_n^c$ and $C_n^c$ are complements of the sets $Z_n$ and $C_n$, respectively.  Let $B_n:= \cup_{m = 1}^n Z_m^c$ for all $n = 1,2,\ldots\,.$  The Borel sets $B_n,$ $n=1,2,\ldots,$ satisfy the properties stated in Assumption~\ref{Feller}.

(b,c)  Statements (b) and (c) are obvious. \qed
\end{proof}

\begin{remark}
\label{T-eq}
Under Assumption~\ref{Feller}, which, as stated in Lemma~\ref{l:A-eq}(a), is equivalent to Assumption~\ref{LB}, Feller~\cite{Fel} studied Kolmogorov's equations for the time parameter  $t\in [T_0, T_1[.$  Under Assumption~\ref{LB}, Feinberg et al.~\cite{FMS} studied Kolmogorov's equations for the time parameter $t\in [T_0, T_1[ = [0,\infty[.$ %for $[T_0, T_1[= [0, \infty[$,
It is apparent that the formulation  of results for an arbitrary interval $[T_0, T_1[,$  where $0\le T_0< T_1\le\infty,$ is more general than 	their formulation for the interval $[0,\infty[.$  In fact, these two formulations are equivalent under Assumption~\ref{LB} holding for the corresponding time intervals.  Indeed, a $Q$-function $q,$ defined for $t\in [T_0,T_1[$ and satisfying Assumption~\ref{LB} on this interval, can be extended to
all $t\in [0,\infty[$ by setting $q(x,t,B):=0$ for $x\in {\bf X},$ $t\in [0, T_0[\cup [T_1,\infty[,$  and $B\in\mathfrak{B}({\bf X}).$ The extended $Q$-function satisfies  Assumption~\ref{LB} for $t\in [0,\infty[.$   Since solutions of  Kolmogorov's equations \eqref{eq:BKDE} and \eqref{eq:FKDE} for the extended $Q$-function are constants in $t,$ when $t\in [0,T_0[$ and  $t\in [T_1,\infty[,$ and since Kolmogorov's equations for the original $Q$-function $q$ and the extended $Q$-function coincide when $t \in [T_0, T_1[,$ there is a one-to-one correspondence between solutions of Kolmogorov's equations for the $Q$-function $q$ and for the extended $Q$-function. Since Assumption~\ref{LB} is assumed in \cite{FMS}, the results obtained in~\cite{FMS} for the problem formulations for the interval  $[0,\infty[$ hold for an arbitrary  interval $[T_0,T_1[.$ %if Assumption~\ref{LB} holds.
\end{remark}

As explained in Remark~\ref{T-eq}, if a $Q$-function $q$ satisfies  Assumption~\ref{LB} on the interval $[T_0, T_1[,$ its extension to $t\in [0,\infty[$ defined in  Remark~\ref{T-eq}  satisfies the same assumption on $[0, \infty[.$  The following example illustrates that this need not be the case if $q$ satisfies Assumption~\ref{ALB} or Assumption~\ref{L1}. Hence, following Feller~\cite{Fel}, we formulate the results in this paper for an arbitrary interval $[T_0, T_1[$ with $0\le T_0 < T_1<\infty.$
\begin{example}
\label{ex:B-eq}
{\it A $Q$-function $q$ satisfies  Assumption~\ref{ALB} on the interval $[T_0, T_1[,$ while its extention to $t\in [0,\infty[$  defined in Remark~\ref{T-eq} does not satisfy even the weaker  Assumption~\ref{L1} when $T_0 = 0$ and $T_1 = \infty$. }Fix an arbitrary $T_1 \in ]0, \infty[.$ Let $T_0 := 0$, ${\bf X} := \{1,2,\ldots\},$ and $q(x,t):= \frac{1}{T_1-t}$ for all $x \in {\bf X}, 0 \le t < T_1$. Then $\sup_{t\in [T_0,s[} q(x,t)\le (T_1-s)^{-1}<\infty$ for each $s\in ]T_0,T_1[$ and  $x\in {\bf X}.$ Thus the $Q$-function $q$ satisfies Assumption~\ref{ALB}.

Consider the extension of $q$ to $t\in [0,\infty[$ defined in Remark~\ref{T-eq}  and the sequence $\{t_m, m = 1,2,\ldots\}  \subset [0, T_1[$ with $t_m = T_1 - \frac{1}{m}$ for all $m = 1,2,\ldots\ .$ Observe that %$t_m \uparrow T_1$ as $m \to \infty$. This fact and \eqref{q-ast} imply that
\begin{equation}
\int_{0}^{T_1} q(x,s)ds = \lim_{m \to \infty}\int_{0}^{t_m} q(x,s)ds = \lim_{m \to \infty} \int_{0}^{t_m} \frac{1}{T_1-s}ds = \lim_{m \to \infty} \log{(m \times T_1)} = \infty.
\end{equation}
Therefore, the described extension of $q$ from $t\in [0,T_1[$ to  $t\in [0,\infty[$ does not satisfy Assumption~\ref{L1}. % when $T_0 = 0$ and $T_1 = \infty$. %In view of Lemma~\ref{l:A-eq}(iii), $q^*$ does not satisfy Assumption~\ref{ALB}  when $T_0 = 0$ and $T_1 = \infty$ either.
\qed
\end{example}

In Section~\ref{S-BE} we show in Theorem~\ref{thm:JMP} that under Assumption~\ref{L1} the compensator  defined by a $Q$-function and an initial probability measure define a jump Markov process, whose transition function $\bar P$ is described in \eqref{def}, and Theorem~\ref{thm:BKE} states that this function is the minimal function satisfying  Kolmogorov's backward equation.  The function $\bar P$ was introduced in Feller~\cite{Fel}.  Section~\ref{S-FE} deals with Kolmogorov's forward equation, when Assumption~\ref{ALB} holds, and Theorem~\ref{thm:FKDE} states that $\bar P$ is  the minimal function satisfying the forward equation.    Section~\ref{S-GB} presents results on Kolmogorov's forward  equation under Assumption~\ref{LB}.

\section{Jump Markov process defined by a $Q$-function and Kolmogorov's backward equation}
\label{S-BE}

In this section, we show that a $Q$-function satisfying Assumption~\ref{L1} defines a transition function for a jump Markov process.  In addition, this transition function is the minimal  function satisfying Kolmogorov's backward equation defined by this $Q$-function.

Let $x_\infty\notin {\bf X}$ be an isolated point adjoined to the space
${\bf X}$. Denote ${\bar {\bf X}}={\bf X}\cup\{x_\infty\}$.% and $\bar{T}=\ ]T_0, T_1]$.
Consider the Borel $\sigma$-field $\B({\bar {\bf X}})=\sigma(\mathfrak{B}({\bf X}),\{x_\infty\})$ on $\bar {\bf X}$, which is
the minimal $\sigma$-field containing $ \mathfrak{B}({\bf X})$ and
$\{x_\infty\}.$ Let $({\bar {\bf X}} \times ]T_0, T_1])^\infty$ be the set of all sequences $(x_0, t_1, x_1,
t_2, x_2, \ldots)$ with $x_n\in \bar{{\bf X}}$ and $t_{n+1}\in
]T_0, T_1]$ for all $n =0,1,\ldots\ .$ This set is endowed
with the $\sigma$-field generated by the products of the Borel
	$\sigma$-fields $\B(\bar{{\bf X}})$ and $\B(]T_0, T_1])$.

Denote by $\Omega$ the subset of all sequences $\omega= (x_0, t_1,
x_1, t_2, x_2, \ldots)$ from $({\bar {\bf X}} \times ]T_0, T_1])^\infty$ such that: (i) $x_0 \in {\bf X}$; (ii) for all $n =1,2,\ldots\,,$ if $t_n < T_1$, then $t_n < t_{n+1}$ and
$x_n \in {\bf X}$, and if $t_n = T_1$, then $t_{n+1} = t_n$ and
$x_n = x_\infty$. Observe that $\Omega$ is a measurable subset of
$({\bar {\bf X}} \times ]T_0, T_1])^\infty$. Consider the measurable space $(\Omega,
\C{F})$, where $\C{F}$ is the $\sigma$-field of the measurable subsets
of $\Omega$. For all $n = 0,1,\ldots$, let $x_n(\omega)=x_n$ and
$t_{n+1}(\omega)=t_{n+1},$ where $\omega \in \Omega,$ be the random variables defined on the
measurable space $(\Omega, \C{F})$. Let $t_0 := T_0$, $t_\infty
(\omega) := \lim\limits_{n \to \infty} t_n (\omega)$, $\omega \in \Omega$, and for all $t \in [T_0, T_1],$ let $\C{F}_t := \sigma(\B({\bf X}), \C{G}_t)$, where $\C{G}_t := \sigma (I\{x_n \in B\}I\{t_n \le s\}: n \ge 1, T_0 \le s \le t, B \in \B({\bf X})).$ Throughout this paper, we omit $\omega$ whenever possible. % and also follow the standard convention that $0 \times \infty = 0$.

Consider the multivariate point process $(t_n, x_n)_{n =1,2,\ldots}$ on $(\Omega, \C{F})$. Given a $Q$-function $q$ satisfying Assumption~\ref{L1}, define a random measure $\nu$ on $([T_0, T_1[ \times {\bf X})$ as
\begin{equation}
\label{compensator}
%\nu(\omega; [0,t], B): = \int_{0}^{t}\sum_{n \ge 0}I\{t_n < s \leq t_{n+1}\} q^\ast(x_n,s, B \setminus \{x_n\})ds,
\nu(\omega; [T_0,t], B): = \int_{T_0}^{t}\sum_{n \ge 0}I\{t_n < s \leq t_{n+1}\} q(x_n,s, B \setminus \{x_n\})ds,  \quad  t \in [T_0, T_1[,\ B \in \B({\bf X}).
\end{equation}
Observe that $\nu$ is a predictable random measure. Indeed, formula~(\ref{compensator}) coincides with Feinberg et al.~\cite[Eq.~(2)]{FMS} when $T_0 = 0$ and $T_1 = \infty$. Arguments similar to those following  Feinberg et al.~\cite[Eq.~(2)]{FMS}, which show that the random measure $\nu$  defined in \cite[Eq.~(2)]{FMS} is a predictable random measure, imply that the measure $\nu$ defined in \eqref{compensator} is a predictable random measure. Furthermore, $\nu(\{t\}\times {\bf X}) \le 1$ for all $t\in ]T_0,T_1[$ and $\nu([t_\infty,\infty[\times {\bf X})=0. $  According to Jacod~\cite[Theorem~3.6]{Jac}, the predictable random measure $\nu$ defined in \eqref{compensator} and a  probability measure $\gamma$ on $\bf X$ define a unique probability measure $\BB{P}$ on $(\Omega, \C{F})$ such that $\BB{P}(x_0 \in B) = \gamma(B), B \in \B({\bf X}),$ and $\nu$ is the compensator of the random measure of the multivariate point process $(t_n, x_n)_{n \ge 1}$ defined by the triplet $(\Omega, \C{F}, \BB{P})$.

Consider the process $\{\BB{X}_t: t\in [T_0, T_1[\}$,
\begin{equation}
\label{jump}
\mathbb{X}_t(\omega) := \sum_{n \geq 0}{\bf I}\{t_n \leq t < t_{n+1}\}x_n  + {\bf I}\{t_\infty \leq t\}x_\infty,
\end{equation}
defined on $(\Omega, \C{F}, \BB{P})$ and adapted to the filtration $\{\C{F}_{t}, t \in [T_0, T_1[\}$. By definition, the process $\{\BB{X}_t:	 t \in [T_0, T_1[\}$ is a jump process.

For $x \in {\bf X}$ and $t \in [T_0, T_1[,$ let  $q^+(x,t,\cdot)$ be the measure on $({\bf X},\B({\bf X}))$ with values $q^+(x,t,B) := q(x,t,B\setminus \{x\}),$    $B \in \B({\bf X})$. %For all $x,t$, the function $q^+(x,t,\cdot)$ is a positive measure on $({\bf X}, \B({\bf X}))$. In this paper, when we integrate with respect to the measure
In this paper, we use the notation
 \[q(x,t,dz\setminus\{x\}):=q^+(x,t,dz).\]
Following Feller \cite[Theorem 2]{Fel}, for $x \in {\bf X}$, $u \in [T_0, T_1[$, $t \in ]u,T_1[$, and $B \in \B({\bf X})$, define
\begin{equation}
\label{b0} \bar{P}^{(0)} (u,x;t,B): = I\{x \in B\} e^{-\int_u^t
q(x, s) ds},
\end{equation}
and %for $n \geq 1$ define
\begin{equation}
\label{bn}
\bar{P}^{(n)}(u, x; t, B): = \int_{u}^{t} \int_{{\bf X}  }
 e^{ -\int_{u}^{w} q(x,\theta) d\theta }  q(x,w,dy \setminus \{x\}) \bar{P}^{(n-1)}(w, y; t, B)
 dw, \  n=1,2,\ldots\ .
\end{equation}
Set
\begin{equation}
\label{def}
\bar{P}(u, x; t, B) := \sum\limits_{n=0}^{\infty} \bar{P}^{(n)}(u, x; t, B).
\end{equation}
According to Feller~\cite[(27) and Theorem 4]{Fel}, equation \eqref{bn} can be rewritten as  %also provided an equivalent alternative definition given by %\eqref{bn-alt} for the function $\bar{P}^{(n)}, n = 1,2,\ldots\,$, and showed the equivalence of these two definitions; see Feller~\cite[Theorem~4]{Fel}.}
\begin{equation}
\label{bn-alt}
%\bar{P}^{(n)}(u,x;t,B) = \int_u^t \int_X \Pi(s,y;t,B)\bar{P}^{(n-1)}(u,x;s,dy), \qquad n = 1,2,\ldots\,.
\bar{P}^{(n)}(u,x;t,B) = \int\limits_u^t \int\limits_{\bf X} \int\limits_{B } e^{-\int_w^t q(y,\theta)d\theta} q(z,w,dy\setminus \{z\}) \bar{P}^{(n-1)}(u,x;w,dz)dw,\  n = 1,2,\ldots\ .
\end{equation}
Though Feller~\cite{Fel}  considered continuous $Q$-functions, the proof of \eqref{bn-alt} given in Feller~\cite[Theorem~4]{Fel} remains correct for measurable $Q$-functions. %The function $\bar{P}^{(n)}, n = 1,2,\ldots,$ satisfies the recursion

Observe that $\bar{P}$ is a transition function if the $Q$-function $q$ satisfies Assumption~\ref{L1}. For continuous $Q$-functions satisfying Assumption~\ref{Feller}, Feller \cite[Theorems 2, 5]{Fel} proved that: (a) for fixed $u,x,t$ the function $\bar{P}(u,x;t,\cdot)$ is a measure on $({\bf X},\B({\bf X}))$  such that $0 \le \bar{P}(u,x;t,\cdot) \le 1$, and (b) for all $u,x,t,B$ the function $\bar{P}(u,x;t,B)$  satisfies the Chapman-Kolmogorov equation \eqref{CKE}. The proofs remain correct for measurable $Q$-functions satisfying Assumption~\ref{L1}. The measurability of $\bar{P}(u,x;t,B)$ in $u,x,t$ for all $B \in \B({\bf X})$  follows directly from the definitions \eqref{b0}, \eqref{bn}, and \eqref{def}. Therefore, if $q$ satisfies Assumption~\ref{L1}, the function $\bar{P}$ takes values in $[0,1]$ and satisfies properties (i)-(iii) from the definition of a transition function.

\begin{theorem}\rm{(cp.  Feinberg et al.~\cite[Theorem~2.2]{FMS})}
\label{thm:JMP}
Given  a probability measure $\gamma$ on ${\bf X}$ and a $Q$-function $q$ satisfying Assumption~\ref{L1}, the jump process $\{\BB{X}_t: t \in [T_0, T_1[\} $ defined in \eqref{jump} is a jump Markov process with the transition function $\bar{P}$.
\end{theorem}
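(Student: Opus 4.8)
The plan is to read off the conditional laws of the successive jumps from the compensator $\nu$ in \eqref{compensator}, to establish the Markov property of $\{\BB{X}_t\}$ directly from the point-process structure, and then to identify the transition function by decomposing according to the number of jumps, so that the $n$-jump contribution is exactly $\bar{P}^{(n)}$. Since the excerpt has already verified that $\bar{P}$ is a transition function under Assumption~\ref{L1} (properties (i)--(iii)), and since $\{\BB{X}_t\}$ is a jump process by construction, only the probabilistic identification $\BB{P}(\BB{X}_t\in B\mid \BB{X}_u)=\bar{P}(u,\BB{X}_u;t,B)$ together with the Markov property remains to be shown.

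First I would extract, from Jacod's characterization \cite[Theorem~3.6]{Jac} and the explicit form of $\nu$, the one-step conditional distribution of the point process. On the event $\{t_n=u<T_1,\ x_n=x\}$ the compensator \eqref{compensator} has integrand $q(x,s,\cdot)$ until the next jump, so the residual holding time is governed by the integrated rate $\int_u^\cdot q(x,s)\,ds$: the conditional survival probability is $\exp(-\int_u^t q(x,s)\,ds)$, and the conditional sub-probability of a jump in $dw$ into $B$ is $\exp(-\int_u^w q(x,s)\,ds)\,q(x,w,B\setminus\{x\})\,dw$. Assumption~\ref{L1} enters precisely here: for every $u\le t<T_1$ one has $\int_u^t q(x,s)\,ds\le\int_{T_0}^t q(x,s)\,ds<\infty$, so these survival factors are strictly positive and the holding-time law is a genuine (possibly defective) distribution on $]u,T_1[$. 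This local integrability is what replaces the boundedness $\bar{q}(x)<\infty$ used in \cite{FMS}.

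Second, I would establish the Markov property by exploiting that the process regenerates at each jump. Conditioning $\BB{P}(\BB{X}_t\in B\mid \C{F}_u)$ on the last jump before $u$, say $t_n\le u<t_{n+1}$ with $\BB{X}_u=x_n$, the form of $\nu$ shows that the residual holding time and the whole subsequent evolution depend on the past only through the current time $u$ and the current state $x_n$; hence $\BB{P}(\BB{X}_t\in B\mid\C{F}_u)=\BB{P}(\BB{X}_t\in B\mid\BB{X}_u)$, $\BB{P}$-a.s. To identify this conditional probability with $\bar{P}$, I would prove by induction on $n$ that, conditionally on $\BB{X}_u=x$, the probability of being in $B$ at time $t$ after exactly $n$ jumps equals $\bar{P}^{(n)}(u,x;t,B)$. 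The base case $n=0$ is the no-jump event, of probability $I\{x\in B\}\exp(-\int_u^t q(x,s)\,ds)$, matching \eqref{b0}; the inductive step conditions on the first jump at $w\in\,]u,t]$ into $dy$, whose sub-density is the factor computed above, and then applies the inductive hypothesis through $\bar{P}^{(n-1)}(w,y;t,B)$, reproducing \eqref{bn} verbatim. Summing the disjoint $n$-jump events and using monotone convergence then yields $\BB{P}(\BB{X}_t\in B\mid\BB{X}_u=x)=\sum_{n\ge0}\bar{P}^{(n)}(u,x;t,B)=\bar{P}(u,x;t,B)$, the missing mass $1-\bar{P}(u,x;t,{\bf X})$ being carried by the explosion event $\{t_\infty\le t\}$.

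The main obstacle is the first step under the weakened hypothesis: rigorously deriving the conditional jump laws from Jacod's theorem and verifying the regeneration property when $q(x,\cdot)$ is merely locally integrable and may blow up as $t\uparrow T_1$. One must check that the integrated rate stays finite on every $[T_0,s]$ with $s<T_1$, so that no jump can occur with certainty at a fixed time and the holding-time distributions are well defined, and that the measurability and uniqueness furnished by \cite[Theorem~3.6]{Jac} survive the unboundedness. Once the conditional laws are in hand, the induction identifying $\bar{P}^{(n)}$ and the summation are the same routine computations as in \cite[Theorem~2.2]{FMS}; the only new input is that every appeal to $\bar{q}(x)<\infty$ there is replaced by the local integrability $\int_{T_0}^s q(x,t)\,dt<\infty$ of Assumption~\ref{L1}.
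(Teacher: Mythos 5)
Your proposal is correct and matches the paper's approach: the paper's proof simply invokes the argument of Feinberg et al.~\cite[Theorem~2.2]{FMS} and observes that the only property of the $Q$-function used there is the finiteness of $\int_u^t q(x,s)\,ds$, which is exactly the point you isolate as the sole place where Assumption~\ref{L1} replaces the boundedness hypothesis. Your sketch merely unpacks the structure of that cited proof (conditional jump laws from Jacod's theorem, regeneration at jumps, induction on the number of jumps to identify $\bar{P}^{(n)}$), so there is no substantive difference.
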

\begin{proof}
The statement of the theorem follows from the same arguments as in the proof of Feinberg et al.~\cite[Theorem~2.2]{FMS}, where the case $T_0=0$ and $T_1=\infty$ was considered. % given for parameters $u,t \in [T_0, T_1[$ instead of for $u,t \in [0,\infty[\,.$
We remark that though it was assumed there that the $Q$-function  $q$  satisfies Assumption~\ref{LB},  the arguments in the proof in \cite{FMS} only require that,
\begin{equation}
\label{eq-a}
\int_u^t q(x,s)ds < \infty, \qquad x \in {\bf X},\  u\in [T_0,T_1[,\  t \in ]u,T_1[,
\end{equation}
and this holds in view of Assumption~\ref{L1}. \qed
\end{proof}

 Let $\cal P$ be the family of all real-valued non-negative functions $P(u,x;t,B),$ defined for all $t \in ]T_0, T_1[,$  $u \in [T_0, t[,$ $x \in {\bf X},$ and $B \in \B({\bf X}),$  which are measurable  in $(u,x)\in [T_0, t[\times {\bf X}$ for all $t\in ]T_0, T_1[$ and $B\in \B({\bf X}).$ Observe  that $\bar{P} \in {\cal P}.$  %The following definition is equivalent to Definition 3.1 in \cite{FMS}, where the measurability of $P$ in $(u,x)$ is implicitly assumed.

Consider a set $E$ and some family $\cal A$ of functions $f:E\to\bar{\BB{R}}=[-\infty,+\infty].$  A function $f$ from $\cal A$ is called minimal in the family $\cal A$ if for every function $g$ from $\cal A$ the inequality $f(x)\le g(x)$ holds  for all $x\in E.$ The following theorem generalizes   Theorems 3.1 and 3.2 in Feinberg et al. ~\cite{FMS}, stating the same statements under Assumption~\ref{LB} which is stronger than Assumption~\ref{L1}.    The measurability in $(u,x)$ of a function satisfying Kolmogorov's backward equation is implicitly assumed in Feinberg et al.~\cite{FMS}.
\begin{theorem}%\rm{(Feinberg et al.~\cite[Theorems 3.1, 3.2]{FMS})}
\label{thm:BKE}
Under Assumption~\ref{L1}, the transition function $\bar{P}$ is the minimal function in $\cal P$ satisfying  the following two properties:

(i) for all $t\in]T_0,T_1[,$  $x\in {\bf X},$ and $B\in\B({\bf X}),$
\begin{equation}
\label{BC1}
\lim\limits_{u \to t-}P(u,x;t,B) = {\bf I} \{ x \in B \},
\end{equation}
 and the function is absolutely continuous in $u \in [T_0, t[;$

(ii) for all $t\in]T_0,T_1[,$  $x\in {\bf X},$ and $B\in\B({\bf X}),$ Kolmogorov's backward equation
\begin{equation}
\label{eq:BKDE}
%\begin{split}
\frac{\partial}{\partial u}{P}(u,x;t,B) =
q(x,u){P}(u,x;t,B)
- \int_{{\bf X} }q(x,u,dy \setminus
\{x\})P(u,y;t,B) %,\qquad B\in\B({\bf X}),
%\end{split}
\end{equation} holds for almost every $u\in [T_0,t[.$

 In addition, if the transition function  $\bar{P}$ is  regular (that is, $\bar{P}(u,x;t,{\bf X}) $ $=1$ for all
$u,$ $x,$ $t$ in the domain of $\bar P$), then $\bar{P}$ is the unique
function in $\cal P$  satisfying  properties (i), (ii) %for all $t\in]T_0,T_1[,$  $x\in {\bf X},$ and $B\in\B({\bf X}),$
and which is
 a measure on $({\bf X}, \B({\bf X}))$ for all $t\in]T_0,T_1[,$ $u\in [T_0,t[,$ and  $x\in {\bf X},$
 and taking values in $[0,1]$.
\end{theorem}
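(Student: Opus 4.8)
The plan is to prove the theorem in three stages: show that $\bar P$ itself satisfies (i) and (ii); show that $\bar P$ is dominated by every $P\in\cal P$ satisfying (i) and (ii); and finally settle the uniqueness claim when $\bar P$ is regular. Throughout, the one fact supplied by Assumption~\ref{L1} that drives everything is that $s\mapsto q(x,s)$ is integrable on $[T_0,t]$, so $u\mapsto\int_u^t q(x,s)\,ds$ is bounded, absolutely continuous, and vanishes as $u\to t-$.

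For the first stage I would read properties (i) and (ii) off the series \eqref{def}. The leading term $\bar P^{(0)}(u,x;t,B)=I\{x\in B\}e^{-\int_u^t q(x,s)\,ds}$ is absolutely continuous in $u$ with $\bar P^{(0)}\to I\{x\in B\}$ as $u\to t-$, while each $\bar P^{(n)}$ with $n\ge1$ carries an outer integral over $[u,t]$ that shrinks to $0$; a uniform tail bound on the series, again coming from $\int_{T_0}^t q(x,s)\,ds<\infty$, lets me transfer the boundary behaviour and absolute continuity to $\bar P$. For (ii) I would differentiate \eqref{bn} in $u$ by the Leibniz rule, using $\frac{\partial}{\partial u}e^{-\int_u^w q(x,\theta)\,d\theta}=q(x,u)e^{-\int_u^w q(x,\theta)\,d\theta}$, to get, for a.e.\ $u$, the recursion $\frac{\partial}{\partial u}\bar P^{(n)}(u,x;t,B)=q(x,u)\bar P^{(n)}(u,x;t,B)-\int_{\bf X}q(x,u,dy\setminus\{x\})\bar P^{(n-1)}(u,y;t,B)$. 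Summing over $n$ telescopes the second terms against the first and produces \eqref{eq:BKDE} for $\bar P$, the interchange of summation and differentiation being justified by the uniform convergence of the differentiated series.

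The heart of the proof is the second stage, where I would show that every $P\in\cal P$ obeying (i) and (ii) necessarily satisfies the integral equation
\[
P(u,x;t,B)=\bar P^{(0)}(u,x;t,B)+\int_u^t\int_{\bf X}e^{-\int_u^v q(x,s)\,ds}\,q(x,v,dy\setminus\{x\})\,P(v,y;t,B)\,dv. \qquad(\ast)
\]
To obtain $(\ast)$ I would multiply \eqref{eq:BKDE} by the integrating factor $\mu(u):=e^{\int_u^t q(x,s)\,ds}$, which is bounded and absolutely continuous on $[T_0,t]$, and check by the product rule that $\frac{d}{du}\bigl[\mu(u)P(u,x;t,B)\bigr]=-\mu(u)\int_{\bf X}q(x,u,dy\setminus\{x\})P(u,y;t,B)$ for a.e.\ $u$. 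Because $P\ge0$ and $q(x,u,\cdot\setminus\{x\})\ge0$, this derivative is nonpositive, so $u\mapsto\mu(u)P(u,x;t,B)$ is nonincreasing; together with the boundary value $\mu(v)P(v,x;t,B)\to I\{x\in B\}$ as $v\to t-$ from \eqref{BC1}, the monotonicity forces the derivative to be integrable up to $t$ and legitimizes the fundamental theorem of calculus on $[u,t]$, which upon dividing by $\mu(u)$ yields $(\ast)$. Minimality is then algebraic: the recursion \eqref{bn} shows that $\bar P$ itself solves $(\ast)$, and writing $(\ast)$ as $P=\bar P^{(0)}+\mathcal L P$ with the positivity-preserving operator $\mathcal L$, nonnegativity of $P$ gives $P\ge\bar P^{(0)}$; iterating $\mathcal L$ gives $P\ge\sum_{n=0}^{N}\bar P^{(n)}$ for every $N$, hence $P\ge\bar P$.

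For the final stage, assume $\bar P$ regular and let $P\in\cal P$ satisfy (i), (ii), be a measure in $B$, and take values in $[0,1]$. Minimality gives $\bar P(u,x;t,B)\le P(u,x;t,B)$ for every $B$; applying this to $B$ and to ${\bf X}\setminus B$ and adding, the regularity $\bar P(u,x;t,{\bf X})=1$ and the bound $P(u,x;t,{\bf X})\le1$ force $P(u,x;t,{\bf X})=1$ and then equality $\bar P(u,x;t,B)=P(u,x;t,B)$ for all $B$, which is uniqueness. I expect the main obstacle to be the passage from the differential form (i)--(ii) to the integral equation $(\ast)$: only the a.e.\ validity of \eqref{eq:BKDE} and absolute continuity on the half-open interval are available, so the delicate point is to justify the fundamental theorem of calculus all the way up to the endpoint $t$, and the monotonicity of $\mu(u)P(u,x;t,B)$ furnished by nonnegativity is precisely what supplies the integrability that this requires.
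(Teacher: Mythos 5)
Your proposal is correct, but note that the paper's own proof of Theorem~\ref{thm:BKE} is essentially a citation: it observes that the statement is Theorems~3.1 and 3.2 of Feinberg et al.~\cite{FMS} combined, and that the proofs there use only the finiteness of $\int_u^t q(x,s)\,ds$ (property~\eqref{eq-a}), which Assumption~\ref{L1} supplies. What you have written out in full --- the integrating factor $e^{\int_u^t q(x,s)\,ds}$ converting the differential form (i)--(ii) into the integral equation $(\ast)$, the iteration $P\ge\sum_{m=0}^{N}\bar P^{(m)}$ via \eqref{bn} for minimality, and the additivity argument over $B$ and ${\bf X}\setminus B$ for uniqueness under regularity --- is precisely the content of those cited proofs, and your opening remark that the only input needed from Assumption~\ref{L1} is integrability of $q(x,\cdot)$ on $[T_0,t]$ is exactly the observation by which the paper upgrades \cite{FMS} from Assumption~\ref{LB} to Assumption~\ref{L1}. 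So you gain a self-contained argument where the paper defers to a reference; the paper gains brevity. The one spot where your sketch is thinner than it should be is the verification that $\bar P$ itself satisfies (ii): the differentiated series has terms of both signs, and ``uniform convergence of the differentiated series'' is not by itself a justification under Assumption~\ref{L1}. The clean repair is either to sum the recursions \eqref{bn} by monotone convergence to get the integral equation $(\ast)$ for $\bar P$ first and then differentiate once, or to note that the partial sums $S_N=\sum_{m=0}^N\bar P^{(m)}$ satisfy $0\le S_N\le 1$ and hence $\bigl|\partial S_N/\partial u\bigr|\le 2q(x,u)$ (using that $q$ is conservative), so that dominated convergence in the identity $S_N(v)-S_N(u)=\int_u^v (\partial S_N/\partial w)\,dw$ passes absolute continuity and the a.e.\ derivative to the limit $\bar P$. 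With that point tightened, the argument is complete and matches the one the paper imports from \cite{FMS}.
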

\begin{proof}
Under Assumption~\ref{LB}, this theorem is Theorems~3.1 and 3.2 from Feinberg et al.~\cite{FMS} combined. However, the proofs there only use the property  that \eqref{eq-a} holds,  and this property is true under Assumption~\ref{L1}. Therefore, the statement of the theorem holds. \qed
\end{proof}

\section{Kolmogorov's forward equation}
\label{S-FE}

Under Assumption~\ref{LB}, Kolmogorov's forward equation~\eqref{eq:FKDE} was studied by Feller~\cite[Theorem~1]{Fel} for continuous $Q$-functions and by Feinberg et al.~\cite[Theorems~4.1, 4.3]{FMS} for measurable $Q$-functions. In this section, we study Kolmogorov's forward equation~\eqref{eq:FKDE} under Assumption~\ref{ALB},
which, in view of Lemma~\ref{l:A-eq}(b), is more general than Assumption~\ref{LB}.

Let $\hat{\cal P}$ be the family of real-valued functions $\hat{P}(u,x;t,B),$  defined for all $u \in [T_0, T_1[$, $t \in ]u, T_1[$, $x \in {\bf X},$ and $B \in \B({\bf X}),$  which are measures on $({\bf X}, \B({\bf X}))$ for fixed $u,$ $x,$ $t$ and are measurable functions in $t$ for fixed $u,$ $x,$ $B.$  In particular, $\bar P\in \hat{\cal P},$ where $\bar P$ is defined in \eqref{def}.

\begin{definition}
\label{defXnt}
For $s \in ]T_0, T_1],$ a set $B\in \mathfrak{B}({\bf X})$ is  called $(q,s)$-bounded if the function $q(x,t)$ is bounded on the set $B\times [T_0,s[.$ %B \subseteq B_n^t $ for some $n = 1,2,\ldots\ .$
\end{definition}

%\begin{theopargself}
\begin{definition}%\rm{(Feinberg et al.~\cite[p.~262]{FMS})}
\label{defbbf}
A $(q,T_1)$-bounded set  is called $q$-bounded.
\end{definition}
%\end{theopargself}
  In Definition~\ref{defbbf} we follow the terminology from Feinberg et al.~\cite[p.~262]{FMS}. Feller~\cite{Fel} called such sets {\it bounded}. %, where $Q$-functions are defined for $t \in ]0, \infty[$ and %$q(x, t)$ is bounded in $t\in [0, \infty[$ for each $x\in B.$

%A solution $P$ of Kolmogorov's forward equation \eqref{eq:FKDE} is called minimal, if for every solution $\hat P$ of this equation the inequality $\hat P(u,x;t,B)\ge P(u,x;t,B)$ holds  for all  $u \in [T_0, T_1[,$  $t \in ]u,T_1[,$ $x \in X,$ and $B \in \B(X).$
The following theorem  shows that the transition function $\bar{P}$ is the minimal function satisfying Kolmogorov's forward equation.  %As follows from {\bf %Lemma~\ref{LEMMA5}},
Being applied to a function $q$ satisfying  the  stronger Assumption~\ref{LB}, this theorem implies Corollary~\ref{CORR2}, which is a stronger  result than \cite[Theorem 4.3]{FMS}; see explanations before Corollary~\ref{CORR2}.
 %Since Assumption~\ref{ALB} is more general than Assumption~\ref{LB}, Theorem~\ref{thm:FKDE} generalizes Theorems~4.1, 4.2 in Feinberg et al.~\cite{FMS}.

\begin{theorem}
\label{thm:FKDE}
Under Assumption~\ref{ALB}, the transition function $\bar{P}$ is the minimal function in $\hat{\cal P}$ satisfying  the following two properties:

(i) for all $u \in [T_0, T_1[,$   $s \in ]u,T_1[,$  $x \in {\bf X},$ and  $(q,s)$-bounded  sets $B,$
\begin{equation}
\label{BC2}
\lim_{t \to u+} P(u,x;t,B) = {\bf I}\{x \in B\},
\end{equation}
and the function is absolutely continuous in $t \in ]u,s[;$

(ii) for all $u \in [T_0, T_1[,$   $s \in ]u,T_1[,$  $x \in {\bf X},$ and  $(q,s)$-bounded  sets $B,$ Kolmogorov's forward equation
\begin{equation}
\label{eq:FKDE}
%\begin{split}
\frac{\partial}{\partial t} P(u,x;t,B) = -\int_{B} q(y,t) P(u,x;t,dy)
 + \int_{\bf X} q(y,t,B\setminus \{y\})P(u,x;t,dy),
%\end{split}
\end{equation}
holds for almost every $t \in ]u,s[.$

 In addition, if the transition function  $\bar{P}$ is  regular, then $\bar{P}$ is the unique
function in $\hat{\cal P}$  satisfying properties (i), (ii) %for all $u \in [T_0, T_1[,$   $s \in ]u,T_1[,$  $x \in X,$ and  $(q,s)$-bounded  set $B,$
and taking values in $[0,1]$.

\end{theorem}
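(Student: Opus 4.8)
The plan is to leverage Theorem~\ref{thm:intFKE} (announced in the introduction), which asserts the equivalence between Kolmogorov's forward equation~\eqref{eq:FKDE} together with the boundary condition~\eqref{BC2} and the integral equation~\eqref{int-FKE}. Reducing the differential formulation to an integral one is the standard device for this kind of minimality result, and it avoids the delicate issue of differentiating measures in $t$: a function $P\in\hat{\cal P}$ satisfies (i) and (ii) for a given $(q,s)$-bounded set $B$ if and only if it satisfies the associated integral equation, whose right-hand side is monotone in $P$. First I would show that $\bar P$ itself satisfies (i) and (ii). For this I would use the representation~\eqref{bn-alt} of the terms $\bar P^{(n)}$, sum over $n$, and verify by direct computation that $\bar P=\sum_{n\ge 0}\bar P^{(n)}$ solves the integral form of \eqref{eq:FKDE} on $(q,s)$-bounded sets; the two integrals in~\eqref{eq:FKDE} are finite precisely because $B$ is $(q,s)$-bounded and because Assumption~\ref{ALB} guarantees $\sup_{t\in[T_0,s[}q(x,t)<\infty$, so the integrands are controlled on $]u,s[$. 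The boundary condition~\eqref{BC2} follows from~\eqref{b0}, since $\bar P^{(0)}(u,x;t,B)=I\{x\in B\}e^{-\int_u^t q(x,\theta)d\theta}\to I\{x\in B\}$ as $t\to u+$, while each $\bar P^{(n)}$ with $n\ge 1$ vanishes in that limit.

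Next I would establish minimality. Let $P\in\hat{\cal P}$ be any function satisfying (i) and (ii). Writing the integral equation equivalent to (i)--(ii) and iterating it, I would prove by induction on $n$ that
\begin{equation}
\label{minproof}
P(u,x;t,B)\ge \sum_{k=0}^{n}\bar P^{(k)}(u,x;t,B)
\end{equation}
for every $(q,s)$-bounded set $B$ and every $t\in]u,s[$. The base case uses that the zeroth iterate of the integral operator, applied to the nonnegative function $P$, already dominates $\bar P^{(0)}$; the inductive step substitutes the bound for index $n$ back into the integral equation and uses nonnegativity of $P$ and of the kernel $q(x,w,dy\setminus\{x\})$ to pick up the next term $\bar P^{(n)}$. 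Passing to the limit $n\to\infty$ on the right-hand side of~\eqref{minproof} yields $P(u,x;t,B)\ge\bar P(u,x;t,B)$ for all $(q,s)$-bounded $B$. Since every Borel set $B$ can be exhausted by $(q,s)$-bounded sets (Assumption~\ref{ALB} furnishes an increasing sequence of such sets covering ${\bf X}$ for each fixed $s$), monotone convergence extends the inequality from $(q,s)$-bounded sets to all $B\in\B({\bf X})$, and letting $s\uparrow T_1$ covers the full time range; this gives minimality of $\bar P$ in $\hat{\cal P}$.

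Finally, for the uniqueness statement under regularity, suppose $\bar P$ is regular and $P\in\hat{\cal P}$ satisfies (i), (ii), and $0\le P\le 1$. Minimality gives $P\ge\bar P$. For the reverse inequality I would argue that both $P(u,x;t,\cdot)$ and $\bar P(u,x;t,\cdot)$ are subprobability measures, and that regularity $\bar P(u,x;t,{\bf X})=1$ forces the total mass of $P$ to equal $1$ as well; since $P\ge\bar P$ pointwise on sets and both have total mass one, $P(u,x;t,B)=\bar P(u,x;t,B)$ for all $B$, proving uniqueness. The main obstacle I anticipate is the careful handling of the restriction to $(q,s)$-bounded sets throughout the induction: the integral equation is only guaranteed to hold on such sets, so one must check that the intermediate integrals appearing in the iteration remain finite and that the exhaustion/monotone-convergence step is legitimate uniformly enough to pass to arbitrary $B$. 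Controlling these integrals is exactly where Assumption~\ref{ALB}, rather than the weaker Assumption~\ref{L1}, is needed, which is why the forward equation requires local boundedness whereas the backward equation of Theorem~\ref{thm:BKE} made do with local $\mathcal{L}^1$ boundedness.
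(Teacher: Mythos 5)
Your proposal is correct and follows essentially the same route as the paper: reduce properties (i)--(ii) to the integral equation~\eqref{int-FKE} via Theorem~\ref{thm:intFKE}, iterate that equation with~\eqref{bn-alt} to get $P\ge\sum_{m=0}^{n}\bar P^{(m)}$ by induction, pass to the limit, and derive uniqueness from regularity by the total-mass argument on $B$ and ${\bf X}\setminus B$. The only cosmetic differences are that the paper verifies that $\bar P$ satisfies (i)--(ii) by restricting to $[T_0,s[$ and invoking the earlier results under Assumption~\ref{LB} (Lemma~\ref{lem:FKDE-sol}) rather than by direct computation, and that it performs the monotone-convergence exhaustion by $(q,s)$-bounded sets inside Theorem~\ref{thm:intFKE} before the induction rather than after it.
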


As  stated in Theorem~\ref{thm:FKDE}, the function $\bar{P}$ satisfies Kolmogorov's forward equation~\eqref{eq:FKDE} for $(q,s)$-bounded sets $B \in \B({\bf X})$. In general, as the following example demonstrates, it is not possible to extend \eqref{eq:FKDE} to all sets $B \in \B({\bf X})$.
\begin{example}\label{ex}
\emph{For a set $B\in\B({\bf X}),$ Kolmogorov's forward equation~\eqref{eq:FKDE} does not hold at all  $t \in ]u,T_1[$.} {\rm Let ${\bf X} = \BB{Z},$ where $\BB{Z}$ denotes the set of integers, $q(0, t) = 1,\  q(0, t, j)= 2^{-(|j| + 1)}$ for all $j \ne 0,$  and $q(j, t, -j) = q(j, t) =  2^{|j|}$ for all $ j \ne 0.$  If ${\BB X}_u=0,$ then starting at time $u$ the process spends an exponentially distributed amount of time at state 0, then it jumps to a state $j\ne 0$ with probability $2^{-(|j|+1)},$ and then it oscillates between the states $j$ and $-j$ with equal intensities.   Thus for all $u \in [T_0, T_1[$ and $t \in ]u,T_1[$
\[\bar{P}(u,0;t,0) = e^{-(t-u)} \qquad \text{ and } \qquad \bar{P}(u,0;t,j)  = \frac{1- e^{-(t-u)}}{2^{|j|+1}}, \qquad j \ne 0,\]
which implies %that
\begin{multline*}
%\int_{\bf X} q(y,t, {\bf X} \setminus \{y\})\bar{P}(u,0;t,dy) &= q(0,t)\bar{P}(u,0;t,0) + \sum_{j \ne 0} q(j,t,-j)\bar{P}(u,0;t,j)\\
\int_{\bf X} q(y,t, {\bf X} \setminus \{y\})\bar{P}(u,0;t,dy)= \int_{\bf X} q(y,t)\bar{P}(u,0;t,dy) \\
%\begin{aligned}
= q(0,t)\bar{P}(u,0;t,0) + \sum_{j \ne 0} q(j,t)\bar{P}(u,0;t,j)= e^{-(t-u)} + \sum_{j > 0} (1- e^{-(t-u)}) = \infty.
%\end{aligned}
\end{multline*}
Thus, if $B ={\bf X}$, then \eqref{eq:FKDE} does not hold with $P=\bar P$ because both integrals in \eqref{eq:FKDE} are infinite.\qed}
\end{example}

The following theorem describes the necessary and sufficient condition for a function $P$ from $\hat{\cal P}$ to satisfy properties (i) and (ii)  stated in Theorem~\ref{thm:FKDE}. In other words, it provides a necessary and sufficient condition that a function $P$ from $\hat{\cal P}$  satisfies Kolmogorov's forward equation. The  necessity part of this theorem plays the central role in proving the minimality property of $\bar P$ stated in Theorem~\ref{thm:FKDE}.
\begin{theorem}
\label{thm:intFKE}
 Let Assumption~\ref{ALB} hold. %For all $u \in [T_0, T_1[,$   $s \in ]u,T_1[,$  $x \in X,$ and  $(q,s)$-bounded  sets $B,$ the two
  A function $P$ from $\hat{\cal P}$ satisfies properties (i) and (ii)  stated in Theorem~\ref{thm:FKDE} if and only if, for all $u \in [T_0, T_1[,$   $t \in ]u,T_1[,$  $x \in {\bf X},$ and  $B \in \B({\bf X}),$ %  the function $\hat{P}$ satisfies the following equality:
\begin{equation}
\label{int-FKE}
\begin{split}
P(u,x;t,B) &= {\bf I}\{x \in B\}e^{-\int_u^t q(x,\theta)d\theta} \\
&\qquad \qquad  + \int_u^t \int_{\bf X} \int_B e^{-\int_w^t q(y,\theta)d\theta} q(z,w, dy\setminus \{z\}) P(u,x;w,dz)  dw.
\end{split}
\end{equation}

\end{theorem}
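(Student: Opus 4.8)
The plan is to read \eqref{int-FKE} as the integrated form of Kolmogorov's forward equation \eqref{eq:FKDE}, with $e^{-\int_w^t q(y,\theta)\,d\theta}$ serving as the integrating factor that annihilates the outflow term. Throughout, fix $u\in[T_0,T_1[$, $x\in{\bf X}$, $t\in]u,T_1[$, and choose $s\in]t,T_1[$; write $P_w(\cdot):=P(u,x;w,\cdot)$ and $\Phi(w,y):=e^{-\int_w^t q(y,\theta)\,d\theta}$. Since $q(y,\theta)\ge0$, one has $0\le\Phi\le1$ with $\partial_w\Phi(w,y)=q(y,w)\Phi(w,y)$ and $\partial_t\Phi(w,y)=-q(y,t)\Phi(w,y)$; under Assumption~\ref{ALB}, $q$ is bounded on $B\times[T_0,s[$ for every $(q,s)$-bounded $B$, which makes these weights and their $q$-multiples bounded on $B$ and supplies the domination needed below.

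For the necessity part (properties (i) and (ii) imply \eqref{int-FKE}), I would first establish \eqref{int-FKE} for $(q,s)$-bounded sets $B$ and then extend it to all Borel sets. On such a $B$, consider $g(w):=\int_B\Phi(w,y)\,P_w(dy)$ for $w\in]u,t]$. Then $g(t)=P_t(B)$, and by the boundary condition in (i) together with the uniform continuity of $\Phi(\cdot,y)$ on $B$ one gets $\lim_{w\to u+}g(w)={\bf I}\{x\in B\}e^{-\int_u^t q(x,\theta)\,d\theta}$, the first term of \eqref{int-FKE}. The crux is to show that $g$ is absolutely continuous with a derivative given by a product rule: differentiating the weight yields $\int_B q(y,w)\Phi(w,y)\,P_w(dy)$, whereas differentiating the measure through \eqref{eq:FKDE}, applied to the bounded test function $y\mapsto\Phi(w,y){\bf I}\{y\in B\}$, yields $-\int_B q(y,w)\Phi(w,y)\,P_w(dy)+\int_{\bf X}\int_B\Phi(w,y)\,q(z,w,dy\setminus\{z\})\,P_w(dz)$. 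The first two contributions cancel, leaving $g'(w)=\int_{\bf X}\int_B\Phi(w,y)\,q(z,w,dy\setminus\{z\})\,P_w(dz)$, and integrating over $]u,t[$ produces exactly the second term of \eqref{int-FKE}. To pass from $(q,s)$-bounded $B$ to all $B\in\B({\bf X})$, I would use that both sides of \eqref{int-FKE} are finite measures in $B$ and that, under Assumption~\ref{ALB}, ${\bf X}$ admits a measurable exhaustion by $(q,s)$-bounded sets, obtained by the same analytic-set and Novikov-separation argument as in the proof of Lemma~\ref{l:A-eq}(a) applied to $\sup_{\theta\in[T_0,s[}q(\cdot,\theta)$; monotone convergence then extends the identity to every Borel $B$.

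For the sufficiency part (\eqref{int-FKE} implies (i) and (ii) for $(q,s)$-bounded $B$), I would differentiate the right-hand side of \eqref{int-FKE} in $t$. The first term contributes $-{\bf I}\{x\in B\}q(x,t)e^{-\int_u^t q(x,\theta)\,d\theta}$; in the double integral $t$ occurs both in the upper limit $w=t$, whose contribution is the inflow $\int_{\bf X}q(z,t,B\setminus\{z\})\,P_t(dz)$ since $\Phi(t,y)=1$, and in the exponent, whose contribution is $-\int_u^t\int_{\bf X}\int_B q(y,t)\Phi(w,y)\,q(z,w,dy\setminus\{z\})\,P_w(dz)\,dw$. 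Reading \eqref{int-FKE} as a measure identity for $P_t(dy)$ and integrating $q(y,t){\bf I}\{y\in B\}$ against it shows that the first-term contribution and the exponent contribution together equal $-\int_B q(y,t)\,P_t(dy)$, the outflow term; hence \eqref{eq:FKDE} holds. The boundary condition \eqref{BC2} follows because the double integral vanishes as $t\to u+$ while $\Phi(t,x)\to1$, and the absolute continuity in $t$ follows from the same differentiation once the integrands are seen to be locally integrable; the needed local integrability of $w\mapsto\int_{\bf X}q(z,w,B\setminus\{z\})\,P_w(dz)$ on $]u,s[$ is itself read off \eqref{int-FKE}, since on the $(q,s)$-bounded set $B$ the factor $\Phi$ is bounded below by a positive constant.

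The step I expect to be the main obstacle is, in the necessity direction, the rigorous justification of the product-rule differentiation of $g$, in which both the weight $\Phi(w,\cdot)$ and the measure $P_w$ vary with $w$. I would make it precise by passing to increments and splitting $g(w_2)-g(w_1)=\int_B[\Phi(w_2,\cdot)-\Phi(w_1,\cdot)]\,dP_{w_2}+\int_B\Phi(w_1,\cdot)\,d[P_{w_2}-P_{w_1}]$, evaluating the first piece via $\Phi(w_2,\cdot)-\Phi(w_1,\cdot)=\int_{w_1}^{w_2}q(\cdot,w)\Phi(w,\cdot)\,dw$ and the second by inserting the integrated form of \eqref{eq:FKDE}, and then letting $w_2-w_1\to0$. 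Two measure-theoretic ingredients support this: extending \eqref{eq:FKDE} from indicators of $(q,s)$-bounded sets to the bounded test function $\Phi(w,\cdot){\bf I}_B$ via the monotone class theorem, legitimate because every Borel subset of a $(q,s)$-bounded set is again $(q,s)$-bounded; and repeated use of Fubini's theorem, with all interchanges dominated by the bound $\Phi\le1$, the boundedness of $q$ on $B\times[T_0,s[$, and the a.e. finiteness of the inflow integral guaranteed by the hypothesis that \eqref{eq:FKDE} holds.
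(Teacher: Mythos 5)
Your proposal is correct in substance and reaches the theorem by a route that is organized differently from the paper's, though the two ultimately rest on the same Fubini interchanges. The paper factors the argument through two lemmas: Lemma~\ref{Cor} converts properties (i) and (ii) into the integral identity \eqref{eq:FKE}, and Lemma~\ref{l:int-S} transforms \eqref{eq:FKE} into \eqref{int-FKE} by working entirely with the \emph{integrated} inflow and outflow measures $H^{(1)},H^{(2)}$ and the key identity \eqref{GH1}, so that no differentiation of a $w$-dependent integral against a $w$-dependent measure is ever needed; the passage to arbitrary Borel $B$ via a $(q,s)$-bounded exhaustion and monotone convergence is then exactly as you describe. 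Your integrating-factor argument on $g(w)=\int_B e^{-\int_w^t q(y,\theta)d\theta}P(u,x;w,dy)$ is the differential (product-rule) version of the same computation: the cancellation you observe between the weight derivative $+\int_B q(y,w)\Phi(w,y)P_w(dy)$ and the outflow term of \eqref{eq:FKDE} is precisely what the chain of equalities \eqref{2} accomplishes in integrated form. What your presentation buys is conceptual transparency (the exponential weight visibly annihilates the outflow); what it costs is the need to rigorously justify the product rule, which you correctly identify as the main obstacle and resolve by passing to increments and inserting the integrated equation --- at which point your proof and the paper's essentially coincide. Two smaller points: in the sufficiency direction, the right-hand side of \eqref{int-FKE} is not literally of the form ``constant plus integral to $t$'' because $t$ also sits in the exponent, so absolute continuity in $t$ cannot be read off directly; the clean fix (and the paper's) is to first rearrange \eqref{int-FKE} into \eqref{eq:FKE} and then invoke Lemma~\ref{Cor}, which is what your observation about integrating $q(y,t){\bf I}\{y\in B\}$ against the measure identity amounts to. And your extension step correctly mirrors the paper's use of Lemma~\ref{l:A-eq}(a) applied on $[T_0,s[$ to produce the $(q,s)$-bounded exhaustion $B^s_n\uparrow{\bf X}$.
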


\begin{lemma}
\label{lem:FKDE-sol}
Under Assumption~\ref{ALB}, the following statements hold:

(a) for each $u \in [T_0, T_1[$, $s\in ]u, T_1[,$  $x \in {\bf X}$, and $B \in \B({\bf X})$, the function $\bar{P}(u,x;t,B) $  %locally
 satisfies the boundary condition \eqref{BC2} and is absolutely continuous in $t \in ]u,s[.$

 (b) %for each $u \in [T_0, T_1[$, $s\in ]u, T_1[,$  $x \in X$, and $(q,s)$-bounded set $B$,
  the function $\bar{P}$ satisfies  property  (ii)  stated in Theorem~\ref{thm:FKDE}. %is a solution of Kolmogorov's forward equation~\eqref{eq:FKDE}.
\end{lemma}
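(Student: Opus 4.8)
The plan is to deduce both parts of the lemma from the equivalence in Theorem~\ref{thm:intFKE} together with a direct check that $\bar{P}$ solves the integral equation \eqref{int-FKE}. First I would show that $\bar{P}$, given by the series \eqref{def}, satisfies \eqref{int-FKE} for all $u,x,t,B$. The term $n=0$ in \eqref{def} is $\bar{P}^{(0)}(u,x;t,B)=I\{x\in B\}e^{-\int_u^t q(x,\theta)d\theta}$, which is exactly the first summand on the right-hand side of \eqref{int-FKE}. For $n\ge 1$ I would use the alternative representation \eqref{bn-alt} of $\bar{P}^{(n)}$ and sum over $n$. Since every factor in \eqref{bn-alt} is nonnegative (the exponential is positive, and $q(z,w,\cdot\setminus\{z\})$ and $\bar{P}^{(n-1)}(u,x;w,\cdot)$ are measures), Tonelli's theorem justifies interchanging $\sum_{n\ge 1}$ with the $dw$-integral and the two inner integrals, and replacing $\sum_{n\ge 1}\bar{P}^{(n-1)}(u,x;w,dz)$ by $\bar{P}(u,x;w,dz)$. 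This produces precisely the integral term of \eqref{int-FKE}. Together with $\bar{P}\in\hat{\cal P}$, recorded in the text preceding the lemma, the ``if'' direction of Theorem~\ref{thm:intFKE} then yields at once that $\bar{P}$ satisfies properties (i) and (ii) of Theorem~\ref{thm:FKDE}. This proves part (b) in full and proves part (a) for every $(q,s)$-bounded set $B$.

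It then remains to extend part (a) from $(q,s)$-bounded sets to arbitrary $B\in\B({\bf X})$. The boundary condition \eqref{BC2} for general $B$ is easy: $\bar{P}^{(0)}(u,x;t,B)\to I\{x\in B\}$ as $t\to u+$ because $\int_u^t q(x,\theta)d\theta\to 0$ under Assumption~\ref{ALB} (in fact under the weaker Assumption~\ref{L1}), while the tail is controlled by $\sum_{n\ge 1}\bar{P}^{(n)}(u,x;t,B)\le \bar{P}(u,x;t,{\bf X})-\bar{P}^{(0)}(u,x;t,{\bf X})\le 1-e^{-\int_u^t q(x,\theta)d\theta}\to 0$, so \eqref{BC2} holds for all $B$.

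The absolute continuity in $t$ for a general $B$ is the crux. I would fix $s'\in\,]u,s[$ and choose $(q,s)$-bounded Borel sets $C_N\uparrow B$; these exist because applying the Novikov separation argument from the proof of Lemma~\ref{l:A-eq}(a) to $x\mapsto\sup_{\theta\in[T_0,s[}q(x,\theta)$, which is finite by Assumption~\ref{ALB}, produces $(q,s)$-bounded sets increasing to ${\bf X}$. Each $\bar{P}(u,x;t,C_N)$ is absolutely continuous on $]u,s'[$ by the case already proved, and $\bar{P}(u,x;t,C_N)\uparrow\bar{P}(u,x;t,B)$ pointwise in $t$. To pass absolute continuity to the limit I would show the derivatives converge in $L^1(]u,s'[)$. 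Here the naive bound fails, since both integrals on the right of \eqref{eq:FKDE} can be individually infinite, as Example~\ref{ex} shows; the remedy is to use the \emph{net} form of the derivative. Writing the right-hand side of \eqref{eq:FKDE} for a $(q,s)$-bounded set $C$ (for which both integrals are finite by part (b)) and splitting destination and source according to membership in $C$, the jumps internal to $C$ cancel and leave
\[
\frac{\partial}{\partial t}\bar{P}(u,x;t,C)=\int_{C^{c}}q(z,t,C)\,\bar{P}(u,x;t,dz)-\int_{C}q(y,t,C^{c})\,\bar{P}(u,x;t,dy),
\]
i.e. only the net probability flux across the boundary of $C$. The main obstacle is to prove that these net fluxes for $C=C_N$ form a Cauchy sequence in $L^1(]u,s'[)$ as $N\to\infty$: one must estimate the integrated flux across the shells $C_{N+1}\setminus C_N$ while retaining the cancellation of interior jumps, rather than bounding the in- and out-fluxes separately. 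Once this $L^1$-convergence is established, writing $\bar{P}(u,x;t,C_N)=I\{x\in C_N\}+\int_u^t\frac{\partial}{\partial r}\bar{P}(u,x;r,C_N)\,dr$ and letting $N\to\infty$ represents $\bar{P}(u,x;t,B)$ as $I\{x\in B\}$ plus the integral of an $L^1$ function, hence as an absolutely continuous function on $]u,s'[$; since $s'<s$ is arbitrary, absolute continuity holds on all of $]u,s[$. I expect this $L^1$-Cauchy estimate, controlling the boundary flux uniformly while exploiting the interior cancellation, to be the hardest step.
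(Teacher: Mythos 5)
The genuine gap is in part (a) for an arbitrary Borel set $B$: the absolute continuity of $t\mapsto\bar P(u,x;t,B)$ on $]u,s[$ is exactly the piece that does not follow from the $(q,s)$-bounded case, and you do not prove it. You correctly observe that the two integrals on the right of \eqref{eq:FKDE} can be individually infinite for general $B$ (Example~\ref{ex}), pass to the net-flux form of the derivative for $(q,s)$-bounded sets $C_N\uparrow B$, and then declare that the resulting fluxes should be Cauchy in $L^1(]u,s'[)$ --- but you explicitly defer that estimate as ``the hardest step'' rather than establishing it. Monotone pointwise convergence of absolutely continuous functions does not by itself preserve absolute continuity; without the uniform $L^1$ control the argument stops short. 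So part (a) is proved only for $(q,s)$-bounded $B$, plus the boundary condition \eqref{BC2} for general $B$ (your tail bound $\sum_{n\ge1}\bar P^{(n)}(u,x;t,B)\le 1-e^{-\int_u^t q(x,\theta)d\theta}\to 0$ is correct).

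The rest of the proposal is sound and takes a genuinely different route from the paper. Summing \eqref{bn-alt} over $n\ge1$ via Tonelli's theorem does show that $\bar P$ satisfies \eqref{int-FKE}, and invoking the sufficiency direction of Theorem~\ref{thm:intFKE} (equivalently, of Lemma~\ref{l:int-S}) is not circular, since that direction is proved in the paper without reference to Lemma~\ref{lem:FKDE-sol}; this cleanly yields part (b) and the $(q,s)$-bounded case of part (a). The paper instead proves the whole lemma by reduction to Feinberg et al.~\cite[Theorem~4.1]{FMS}: for (a) it notes that the proof there uses only the integrability condition \eqref{eq-a}, which Lemma~\ref{l:A-eq}(c) provides under Assumption~\ref{ALB} for all $B\in\mathfrak{B}({\bf X})$; for (b) it observes that on $[T_0,s[$ a $Q$-function satisfying Assumption~\ref{ALB} satisfies Assumption~\ref{LB} with $T_1=s$, so \cite[Theorem~4.1(ii)]{FMS} applies, and then lets $s\uparrow T_1$. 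To close your gap you would need either to reproduce the absolute-continuity argument of \cite[Theorem~4.1(i)]{FMS} or to actually carry out the boundary-flux estimate you sketch.
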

\begin{proof}
 (a) Under Assumption~\ref{LB}, statement (a) of this lemma is Theorem~4.1(i) in Feinberg et al.~\cite{FMS}, and the proof there is correct if \eqref{eq-a} holds. In view of Lemma~\ref{l:A-eq}(c),  formula~\eqref{eq-a} is true under Assumption~\ref{ALB}, and therefore, statement (a) of the lemma holds.

(b) Fix an arbitrary $s \in ]T_0, T_1[.$ Observe that a $Q$ function satisfying Assumption~\ref{ALB} satisfies Assumption~\ref{LB} with $T_1=s$. Then  it follows from Feinberg et al.~\cite[Theorem~4.1(ii)]{FMS} that, for all $u \in [T_0, s[$, $x \in {\bf X}$, and $(q,s)$-bounded sets $B \in \B({\bf X})$, the function $\bar{P}(u,x;t,B)$ satisfies Kolmogorov's forward equation~\eqref{eq:FKDE} for almost every $t \in ]u,s[.$ Since $s$ was chosen arbitrarily, this fact implies that the function $\bar{P}$ satisfies property (ii)   stated in Theorem~\ref{thm:FKDE}. % for each $u \in [T_0, T_1[$, $s\in ]u, T_1[,$  $x \in X$, and $(q,s)$-bounded set $B.$}
\qed
\end{proof}

To prove Theorems~\ref{thm:FKDE} and \ref{thm:intFKE} we formulate and prove two lemmas. Lemmas~\ref{Cor} and ~\ref{l:int-S} present Kolmogorov's forward equation in integral  forms~\eqref{int-FKE},~\eqref{eq:FKE}, which are equivalent to its differential form~\eqref{eq:FKDE}. In particular, Theorem~\ref{thm:intFKE} follows from Lemma~\ref{l:int-S}.
  Let $u \in [T_0, T_1[,$  $s \in ]u,T_1[$, $ x \in {\bf X},$ and $B \in \B({\bf X})$ be a $(q,s)$-bounded set.
%Define the nonnegative number
% \[
%  M:=\sup\{ q(y,t):\, y\in B,\ t\in]u,s[\}<\infty.
%\]
For any function $P$ from $\hat{\cal P},$ %for all $(q,s)$-bounded sets $B \in \B({\bf X})$ and $t \in ]u,s[,$
\begin{equation}
\label{almostM}
\int_{B} q(y,t)P(u,x;t,dy) \le \left (\sup_{y \in B, t \in ]u,s[}q(y,t)\right) P(u,x;t,B) < \infty, \qquad   t \in ]u,s[.
\end{equation}
In addition, for $u,$ $s,$ $x,$ and $B$ described above, if the function $P$ satisfies the boundary  condition \eqref{BC2} and is absolutely continuous in $t \in ]u,s[,$ then it is bounded in $t \in ]u,s[,$ which along with \eqref{almostM} implies that
\begin{equation}
\label{eq:HEF2M}
\int_u^t \int_{B} q(y,w)P(u,x;w,dy)dw < \infty, \qquad \qquad \qquad t \in ]u,s[.
\end{equation}
\begin{lemma}
\label{Cor}
For arbitrary fixed $u \in [T_0, T_1[$, $s \in ]u, T_1[$, $x \in {\bf X}$, and $(q,s)$-bounded set $B \in \B({\bf X}),$ %Two properties described in Theorem~\ref{thm:FKDE} %hold for a function $P$ from $\hat{\cal P}$ for the fixed $u,$ $s,$ $x,$ and $B$ if and only if
 a function $P$ from $\hat{\cal P}$ satisfies the  equality
\begin{equation}
\label{eq:FKE}
\begin{split}
P&(u,x;t,B)= {\bf I}\{x \in B\} \\
&- \int_{u}^{t} \int_{B} q(y,w)P(u,x;w,dy)dw  + \int_u^t \int_{\bf X} q(y,w,B\setminus \{y\}) P(u,x; w, dy)  dw, \quad t \in ]u,s[,
\end{split}
\end{equation}
 if and only if it satisfies
 % Kolmogorov's forward equation~\eqref{eq:FKDE}  for almost every $t\in ]u,s[,$  is absolutely continuous in $t \in ]u,s[,$ and satisfies the boundary condition \eqref{BC2}.
the boundary  condition \eqref{BC2}, is absolutely continuous in $t \in ]u,s[,$ and satisfies Kolmogorov's forward equation~\eqref{eq:FKDE}  for almost every $t\in ]u,s[.$
\end{lemma}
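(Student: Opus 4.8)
The plan is to recognize \eqref{eq:FKE} as the integrated form of the differential equation \eqref{eq:FKDE} and to deduce the equivalence from the fundamental theorem of Lebesgue integral calculus. For the fixed $u,s,x,B$ of the statement, abbreviate $\phi(t):=P(u,x;t,B)$ and put
\[
g_1(w):=\int_{B} q(y,w)P(u,x;w,dy),\qquad g_2(w):=\int_{\bf X} q(y,w,B\setminus\{y\})P(u,x;w,dy),
\]
both nonnegative, and set $g:=g_2-g_1$, so that $g(t)$ is the right-hand side of \eqref{eq:FKDE} and, after combining the two integrals over $]u,t[$, equation \eqref{eq:FKE} reads $\phi(t)={\bf I}\{x\in B\}+\int_u^t g(w)\,dw$. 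Measurability of $\phi$ in $t$ holds because $P\in\hat{\cal P}$, and measurability of $g_1,g_2$ in $w$ follows from the measurability properties of a $Q$-function and of $P\in\hat{\cal P}$ together with the standard measurability of parametrized integrals; I would record this at the outset so that every integral below is well defined.

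For the forward implication (from \eqref{eq:FKE} to the three properties), I would first note that \eqref{eq:FKE} presupposes that both $\int_u^t g_1\,dw$ and $\int_u^t g_2\,dw$ are finite for each $t\in]u,s[$, so that $g\in L^1(]u,t[)$ for every such $t$. The boundary condition \eqref{BC2} then follows from absolute continuity of the Lebesgue integral, since $\bigl|\int_u^t g\,dw\bigr|\le\int_u^t|g|\,dw\to 0$ as $t\to u+$; absolute continuity of $\phi$ on every compact subinterval of $]u,s[$ follows because $\phi$ is, up to the additive constant ${\bf I}\{x\in B\}$, an indefinite integral of a locally integrable function; and the Lebesgue differentiation theorem gives $\phi'(t)=g(t)$ for almost every $t$, which is precisely \eqref{eq:FKDE}.

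For the converse I would assume \eqref{BC2}, absolute continuity of $\phi$ on $]u,s[$, and \eqref{eq:FKDE} for almost every $t$. Since $B$ is $(q,s)$-bounded, the bounds \eqref{almostM} and \eqref{eq:HEF2M} give $\int_u^t g_1\,dw<\infty$ for each $t\in]u,s[$. Because $\phi$ is absolutely continuous on each $[u+\varepsilon,t]$, the fundamental theorem of calculus yields $\phi(t)-\phi(u+\varepsilon)=\int_{u+\varepsilon}^t\phi'(w)\,dw=\int_{u+\varepsilon}^t g(w)\,dw$; letting $\varepsilon\to 0+$ and invoking \eqref{BC2} shows that $\lim_{\varepsilon\to 0+}\int_{u+\varepsilon}^t g\,dw=\phi(t)-{\bf I}\{x\in B\}$ is finite. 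Applying monotone convergence to $g_1\ge 0$ then forces $\lim_{\varepsilon\to 0+}\int_{u+\varepsilon}^t g_2\,dw$ to be finite as well, so $\int_u^t g_2\,dw<\infty$, and separating $g=g_2-g_1$ recovers \eqref{eq:FKE}.

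The step I expect to be the main obstacle is the behavior at the open left endpoint $u$: the individual integrands $g_1,g_2$ need not be integrable all the way down to $u$ a priori, and only their difference is controlled through the absolutely continuous function $\phi$. The decomposition $g=g_2-g_1$ into nonnegative parts, the a priori finiteness of the $g_1$-integral obtained from the $(q,s)$-boundedness of $B$ via \eqref{eq:HEF2M}, and monotone convergence as $\varepsilon\to 0+$ are exactly what is needed to transfer integrability from the difference to each part separately and to justify replacing $\phi(u+\varepsilon)$ by its limit ${\bf I}\{x\in B\}$.
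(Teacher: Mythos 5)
Your proof is correct and follows essentially the same route as the paper's: both directions reduce to the fundamental theorem of Lebesgue integral calculus, with the $(q,s)$-boundedness of $B$ (via \eqref{almostM} and \eqref{eq:HEF2M}) supplying the a priori finiteness of the $\int g_1$ term so that the difference can be split into two finite integrals. Your extra care at the open left endpoint (the $\varepsilon\to 0+$ limit combined with \eqref{BC2} and monotone convergence) only makes explicit what the paper's appeal to ``integrating \eqref{eq:FKDE} from $u$ to $t$'' leaves implicit.
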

\begin{proof}
Suppose that  a function $P$ from $\hat{\cal P}$  satisfies  the boundary condition \eqref{BC2}, is absolutely continuous in $t \in ]u,s[,$ and satisfies Kolmogorov's forward equation~\eqref{eq:FKDE}  for almost every $t\in ]u,s[$  for  $u,$ $s,$ $x,$ and $B$ described in the formulation of the lemma.
Since every absolutely continuous function is the %indefinite
integral of its derivative, equality %see  e.g., Royden~\cite[Corollary~15, p. 110]{Roy},
\eqref{eq:FKE} follows from  integrating equation \eqref{eq:FKDE} from $u$ to $t$ and using the boundary condition \eqref{BC2}. %we get \eqref{eq:FKE}.
In particular, both integrals in equality \eqref{eq:FKE} are finite because, in view of \eqref{eq:HEF2M}, the first integral is finite.  %It is finite because
%the function $q(y,w)$ is bounded  on $B\times]u,t[,$ since $B$ is a $(q,s)$-bounded set, and the function $P(u,x;w,B)$ is bounded in $w\in ]u,t[$ since %the function $P(u,x;w,B)$  is absolutely continuous in $w \in ]u,s[\supseteq ]u,t[$ and is therefore bounded on $]u,t[$.

Now,  suppose \eqref{eq:FKE} holds for  $u,$ $s,$ $x,$ and $B$ described in the formulation of the lemma. Observe that, for fixed  $u,$ $s,$ $x,$ and $B$, the real valued function $P(u,x;t,B)$ is  a constant plus the the difference of two  integrals from $u$ to $t$  of nonnegative integrable functions defined for $w \in ]u,s[$.  Since an  integral of an integrable function is an absolutely continuous function of the upper limit of integration   %, see e.g., Royden~\cite[Theorem~14, p.~110]{Roy},
and its derivative is equal to the integrand almost everywhere on its domain (Royden~\cite[Thms~10 on p. 107  and 14 on p. 110]{Roy}), %for the fixed $s,u,x$, and $B,$
the function $P(u,x;t,B)$ is %locally
 absolutely continuous in $t \in ]u,s[,$ and Kolmogorov's forward equation~\eqref{eq:FKDE} holds for almost every $t\in ]u,s[$  for the fixed $u,$ $s,$ $x,$ and $B$. In addition, the absolute continuity of the integrals in  \eqref{eq:FKE} implies that  \eqref{BC2} holds. % for the fixed $s,u,x$, and $B.$
 %Therefore,  property  (i)  described in Theorem~\ref{thm:FKDE} holds for the fixed $u,$ $s,$ $x,$ and $B$
  \qed
\end{proof}

\begin{lemma}
\label{l:int-S}
Let $u \in [T_0, T_1[$, $s \in ]u, T_1[$, $x \in {\bf X}$, and $C \in \B({\bf X})$ be a $(q,s)$-bounded set. A function $P$ from $\hat{\cal P}$ satisfies  for all $B \in \B(C)$ the boundary condition  \eqref{BC2}, is absolutely continuous in $t \in ]u,s[,$ and satisfies Kolmogorov's forward equation~\eqref{eq:FKDE}  for almost every $t\in ]u,s[$
  %for  the fixed $u,$ $s,$ $x,$ and $B\in\B(C)$
  if and only if  it satisfies equality \eqref{int-FKE}  %for the fixed $u,$ $x,$  and
  for all $t \in ]u,s[$ and $B \in \B(C).$
\end{lemma}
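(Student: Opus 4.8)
The plan is to reduce the asserted equivalence to the equivalence between the two integral forms \eqref{eq:FKE} and \eqref{int-FKE}, and then to pass between these integral forms by means of a state-dependent integrating factor. First I would observe that, since $C$ is $(q,s)$-bounded, every $B\in\B(C)$ is again $(q,s)$-bounded: indeed $B\times[T_0,s[\subseteq C\times[T_0,s[$, on which $q(\cdot,\cdot)$ is bounded. Hence Lemma~\ref{Cor} applies to each such $B$, and it identifies the left-hand condition of the present lemma---that, for all $B\in\B(C)$, the function $P$ satisfies \eqref{BC2}, is absolutely continuous in $t\in]u,s[$, and satisfies \eqref{eq:FKDE} for almost every $t$---with the statement that $P$ satisfies \eqref{eq:FKE} for all $t\in]u,s[$ and all $B\in\B(C)$. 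It therefore remains only to prove that, for $P\in\hat{\cal P}$, equation \eqref{eq:FKE} holds for all $B\in\B(C)$ and $t\in]u,s[$ if and only if \eqref{int-FKE} holds for all $B\in\B(C)$ and $t\in]u,s[$.

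For this core equivalence I would use, for $y\in C$ and $u\le w\le\rho\le t<s$, the integrating factor $e^{-\int_w^t q(y,\theta)d\theta}$, which by $(q,s)$-boundedness of $C$ stays in a fixed interval of positive numbers, together with the elementary identity $e^{-\int_w^t q(y,\theta)d\theta}=1-\int_w^t q(y,\rho)e^{-\int_\rho^t q(y,\theta)d\theta}d\rho$ obtained by applying the fundamental theorem of calculus to $\rho\mapsto e^{-\int_\rho^t q(y,\theta)d\theta}$. To derive \eqref{int-FKE} from \eqref{eq:FKE}, I would substitute this identity for the factor $e^{-\int_w^t q(y,\theta)d\theta}$ in the double integral of \eqref{int-FKE}, interchange the orders of the resulting time integrations by Tonelli's theorem, and then replace the double-time integrals of the jump-in measure $\int_{\bf X}q(z,w,dy\setminus\{z\})P(u,x;w,dz)$ by their expressions from \eqref{eq:FKE}. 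The killing integrals $\int_u^t\int_B q(y,w)P(u,x;w,dy)dw$ produced by the integrating factor then cancel against the killing term of \eqref{eq:FKE}, while the boundary contribution collapses, via the same identity taken at $w=u$ and $y=x$, to ${\bf I}\{x\in B\}e^{-\int_u^t q(x,\theta)d\theta}$; what remains is precisely \eqref{int-FKE}. The converse implication is obtained by the symmetric computation, substituting \eqref{int-FKE} into the two integrals of \eqref{eq:FKE} and reassembling \eqref{eq:FKE} by the same identity and Tonelli's theorem. (Alternatively, one may differentiate \eqref{int-FKE} in $t$, the upper limit supplying the jump-in term and the $t$-dependence of the exponent supplying the killing term, to recover \eqref{eq:FKDE} and \eqref{BC2}, and read absolute continuity in $t$ off \eqref{int-FKE} directly.)

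The main obstacle will be the measure-valued bookkeeping: I must ensure that every rearrangement is legitimate, that is, that no step produces an $\infty-\infty$ and that Tonelli's theorem genuinely applies. This is exactly where the hypothesis that $C$ is $(q,s)$-bounded enters. Because $q(\cdot,\cdot)$ is bounded on $C\times[T_0,s[$, the bound \eqref{almostM} and its integrated form \eqref{eq:HEF2M} show that the killing integrals $\int_u^t\int_B q(y,w)P(u,x;w,dy)dw$ are finite for $B\in\B(C)$, whereas \eqref{eq:FKE} (respectively \eqref{int-FKE}) shows that the total jump-in mass $\int_u^t\int_{\bf X}q(z,w,B\setminus\{z\})P(u,x;w,dz)dw$ is finite as well; since all integrands are nonnegative, Tonelli's theorem licenses every interchange and the finiteness of the individual terms makes the cancellations meaningful. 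I would therefore record these finiteness facts at the outset, regard $P(u,x;w,\cdot)$ as a finite measure on $\B(C)$ throughout so that \eqref{eq:FKE} reads as an identity between finite measures on $\B(C)$ (valid because it is assumed for every $B\in\B(C)$) that may be integrated against the bounded factor $q(y,\rho)e^{-\int_\rho^t q(y,\theta)d\theta}$, and only then carry out the Fubini rearrangements and cancellations described above.
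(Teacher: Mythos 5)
Your proposal is correct and follows essentially the same route as the paper: reduce the differential form to the integral form \eqref{eq:FKE} via Lemma~\ref{Cor}, then pass between \eqref{eq:FKE} and \eqref{int-FKE} using the identity $1-e^{-\int_w^t q(y,\theta)d\theta}=\int_w^t q(y,v)e^{-\int_v^t q(y,\theta)d\theta}dv$, Fubini--Tonelli interchanges, and the finiteness of the killing and jump-in integrals guaranteed by $(q,s)$-boundedness to justify the cancellations. The only cosmetic difference is that the paper organizes the computation through the auxiliary quantities $G^{(j)}$ and $H^{(j)}$ and uses a slightly different variant of the exponential identity in the sufficiency direction, but the substance is identical.
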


\begin{remark}
\label{T-eq2}
The sufficiency statements of Theorem~\ref{thm:intFKE} and Lemma~\ref{l:int-S} are not used in the proofs in this section.%Theorem~\ref{thm:FKDE}}.
\end{remark}
\begin{proof}\emph{of Lemma~\ref{l:int-S}} % \emph{Necessity. }  ....  \emph{Sufficiency.}
 %The rest of the proof establishes \eqref{int-FKE}.
%
%
 The following version of Fubini's  theorem from Halmos~\cite[Section~36, Remark~(3)]{Halmos} is used in the proof. Let  $(Z, {\bf S}, \mu)$ be   a measure space   with $\mu(Z) < \infty,$ and let $(Y, {\bf T})$  be a measurable space.   Suppose that to almost every $z \in Z$ there corresponds  a finite measure $\nu_z$ on ${\bf T}$ such that the function $\phi(z):=\nu_z(B)$ is measurable in $z$ for each measurable subset $B$ of $Y.$ Then, for any non-negative measurable function $g$ on $Y$,
\begin{equation}
\label{eq:exchange}
\int_{Z} \left(\int_Y g(y) \nu_z(dy) \right) \mu(dz) = \int_Y g(y) \nu(dy),
\end{equation}
where, for each measurable subset $B$ of $Y$,
\[\nu(B):= \int_Z \nu_z(B) \mu(dz).\]

 Let us fix $ P\in\hat{\cal P},$  $u \in [T_0, T_1[$, $s \in ]u, T_1[$, $x \in {\bf X}$, and a $(q,s)$-bounded set $C \in \B({\bf X})$. To simplify notations,  define
\begin{align}
\label{G-1}
G^{(1)}(t,B) &:= \int_{\bf X} q(z,t,B \setminus \{z\}) {P}(u,x;t,dz), \qquad & t \in ]u,s[,\ B \in \B(C),\\
\label{G-2}
G^{(2)}(t,B) &:= \int_{\bf X} {\delta}_z(B)q(z,t) {P}(u,x;t,dz),\qquad & t \in ]u,s[,\ B \in \B(C),
\end{align}
where $\delta_z(\cdot)$ is the Dirac measure on $({\bf X}, \B({\bf X})),$ %defined by
\begin{equation}
\label{DM}
\delta_z(B) := {\bf I}\{z \in B\}, \qquad B \in \B({\bf X}).
\end{equation}
Observe that, for $j = 1,2,$  the function $G^{(j)}(t,\cdot)$ is a measure on $(C, \B(C))$ for every $t \in ]u,s[,$ and   $G^{(j)}(\cdot, B)$ is a measurable function on $]u,s[$  for every $B \in \B(C).$
%Since $C$ is a $(q,s)$-bounded set, formula~\eqref{almostM} and \eqref{G-2} {\bf imply that
%\begin{equation}
% \label{almost}
%G^{(2)}(t, C) < \infty, \qquad\qquad\qquad\qquad\qquad t \in ]u,s[.
%\end{equation}

Let  $t \in ]u,s[,$ $v \in ]u,t[,$ and $B \in \B(C).$ Consider $(Z,{\bf S}, \mu)=({\bf X}, \B({\bf X}), {P}(u,x;v,\cdot))$ and
$(Y, {\bf T}):=(C,\B(C)).$ For $\nu_z(\cdot) = q^+(z, v, \cdot),$ which is finite for all $z \in Z$ since $q$ is a $Q$-function, and for $g(y) = {\bf I}\{ y \in B\}e^{-\int_v^t q(y,\theta)d\theta},$  formula \eqref{eq:exchange} yields
\begin{equation}
\label{ex1}
\int_{\bf X} \left(\int_B e^{-\int_v^t q(y,\theta)d\theta} q(z,v, dy\setminus \{z\})\right) {P}(u,x;v,dz) = \int_B e^{-\int_v^t q(y,\theta)d\theta} G^{(1)}(v,dy).
\end{equation}

 \emph{Necessity.} For all $B \in \B(C)$, let the function $P$ satisfy
  the boundary condition  \eqref{BC2}, be absolutely continuous in $t \in ]u,s[,$ and satisfy Kolmogorov's forward equation~\eqref{eq:FKDE}  for almost every $t\in ]u,s[.$
  %for  the fixed $u,$ $s,$ $x,$ and $B\in\B(C)$  }
% the boundary co
Equation~\eqref{eq:FKDE} can be rewritten as
\begin{equation}\label{eq:FKEShort}
\frac{\partial}{\partial t} P(u,x;t,B) = -G^{(2)}(t,B)+G^{(1)}(t,B).
\end{equation}
Formula \eqref{almostM} means that $G^{(2)}(t,B)<\infty$ for all $t\in ]u,s[.$ This inequality and \eqref{eq:FKEShort} imply that, for $j = 1,2,$
 \begin{equation}
 \label{almost1}
   G^{(j)}(t, C)< \infty \quad \text{ for almost every } \quad t \in ]u,s[.
 \end{equation}

For $j = 1,2,$ consider the non-negative functions $H^{(j)}:(]u,s[ \times \B(C))\to \BB{R}_+,$
\begin{equation}
\label{eq:H}
H^{(j)}(t,B) := \int_u^t G^{(j)}(w,B) dw, \qquad t \in ]u,s[,\ B \in \B(C).
\end{equation}
In view of Lemma~\ref{Cor},
\begin{equation}
\label{eq:FKE-alt}
{P}(u,x;t,B) = {\bf I}\{x \in B\} + H^{(1)}(t,B)- H^{(2)}(t,B), \qquad t \in ]u,s[,\ B \in \B(C).
\end{equation}
Equality \eqref{eq:HEF2M}, which implies \eqref{eq:HEF} for $j=2,$  and \eqref {eq:FKE-alt} yield
\begin{equation}
\label{eq:HEF}
H^{(j)}(t,B) <\infty, \qquad\qquad j=1,2,\ t \in ]u,s[,\ B \in \B(C).
\end{equation}

%Let $t \in ]u,s[$ and $B \in \B(C).$
Observe that, for any measure $p(\cdot)$ on $(C, \B(C))$ and $w \in [u,t[$,
\begin{equation}
\label{0G}
\begin{aligned}
 \int_B (1-e^{-\int_w^t q(y, \theta)d\theta})p(dy) &= \int_B \left (\int_w^t q(y,v) e^{-\int_v^t q(y,\theta)d\theta} dv\right)p(dy) \\
&= \int_w^t \int_B q(y,v) e^{-\int_v^t q(y,\theta)d\theta}  p(dy) dv,
\end{aligned}
\end{equation}
where the first equality is correct since
\begin{equation}
\label{exp}
\int_w^t q(y,v)e^{-\int_v^t q(y,\theta)d\theta} dv  = 1 - e^{-\int_w^t q(y,\theta)d\theta}, \qquad y \in {\bf X},%, \qquad y \in {\bf X}, w \in [T_0, T_1[, t \in ]w,T_1[,
\end{equation}
and the last one is obtained by changing the order of integration in $y$ and $v$ and applying Fubini's theorem. Let $j = 1,2,$ $t \in ]u,s[$, and $B \in \B(C).$  Then
\begin{multline}
\label{GH1}
H^{(j)}(t, B) - \int_u^t \int_B e^{-\int_w^t q(y,\theta)d\theta} G^{(j)}(w,dy) dw  =  \int_u^t \int_B (1- e^{-\int_w^t q(y,\theta)d\theta}) G^{(j)}(w,dy) dw\\
\begin{aligned}
%&=\int_u^t \int_B \left( \int_w^t q(y,v) e^{-\int_v^t q(y,\theta)d\theta} dv\right)  G^{(j)}(w,dy) dw\\
&=\int_u^t \left( \int_w^t  \int_B q(y,v) e^{-\int_v^t q(y,\theta)d\theta} G^{(j)}(w,dy) dv\right)dw  \\
&= \int_u^t \int_u^v \left (\int_B q(y,v) e^{-\int_v^t q(y,\theta)d\theta} G^{(j)}(w,dy) \right) dw dv,\\
&= \int_u^t \int_B q(y,v) e^{-\int_v^t q(y,\theta)d\theta} H^{(j)}(v,dy)dv,
\end{aligned}
\end{multline}
where the first equality follows from \eqref{eq:H}, the second equality follows from \eqref{0G} with  $p(\cdot) = G^{(j)}(w,\cdot),$  the third equality is obtained by changing the order of integration in $w$ and $v$, and the last one is obtained from formula \eqref{eq:exchange} by setting $(Z, {\bf S}, \mu):=(]u,v[,\B(]u,v[),\lambda),$ where $\lambda$ is the Lebesgue measure,
$(Y, {\bf T}):=(C,\B(C)),$ $\nu_z(\cdot) = G^{(j)}(z, \cdot),$ which, in view of inequality \eqref{almost1} is finite for almost every $z \in Z$, and $g(y) = {\bf I} \{y \in B\}q(y,v) e^{-\int_v^t q(y,\theta)d\theta}.$

For $v \in ]u,t[$, by setting $(Z,{\bf S}, \mu):=({\bf X}, \B({\bf X}), P(u,x;v,\cdot))$, $(Y, {\bf T}):=(C,\B(C))$, $\nu_z(\cdot): = q(z, v)\delta_z(\cdot)$, and  $g(y): = {\bf I}\{ y \in B\}e^{-\int_v^t q(y,\theta)d\theta},$  formula \eqref{eq:exchange}  yields
\begin{equation}
\label{ex2}
\int_B e^{-\int_v^t q(y,\theta)d\theta} G^{(2)}(v,dy) = \int_{\bf X} \left(\int_B e^{-\int_v^t q(y,\theta)d\theta} q(z,v) \delta_z(dy)\right) {P}(u,x;v,dz).
%= \int_B q(y,w) e^{-\int_w^t q(y,\theta)d\theta} \hat{P}(u,x;w,dy).
\end{equation}
 Therefore, for all $t \in ]u,s[$ and $B \in \B(C),$
\begin{align}
\label{2}
%\begin{split}
&H^{(2)}(t,B) = \int_u^t \int_B e^{-\int_v^t q(y,\theta)d\theta} G^{(2)}(v,dy) dv + \int_u^t \int_B q(y,v) e^{-\int_v^t q(y,\theta)d\theta} H^{(2)}(v,dy) dv \notag\\
&= \int_u^t \int_B q(y,v)e^{-\int_v^t q(y,\theta)d\theta} {P}(u,x;v,dy) dv + \int_u^t \int_B q(y,v) e^{-\int_v^t q(y,\theta)d\theta} H^{(2)}(v,dy) dv \notag\\
&= \int_u^t \int_B q(y,v)e^{-\int_v^t q(y,\theta)d\theta} \delta_x(dy)dv + \int_u^t \int_B q(y,v)e^{-\int_v^t q(y,\theta)d\theta} H^{(1)}(v, dy)dv,\\
&= \left({\bf I}\{x \in B\} - {\bf I}\{x \in B\}e^{-\int_u^t q(x, \theta)d\theta}\right) + \left (H^{(1)}(t,B) - \int_u^t \int_B e^{-\int_v^t q(y,\theta)d\theta} G^{(1)}(v,dy) dv \right)\notag\\
&= {\bf I}\{x \in B\} + H^{(1)}(t, B) \notag\\
&\qquad - \left( {\bf I}\{x \in B\} e^{-\int_u^t q(x, \theta)d\theta} + \int_u^t \int_{\bf X} \int_B e^{-\int_v^t q(y,\theta)d\theta} q(z,v, dy\setminus \{z\}) {P}(u,x;v,dz) dv \right)\notag,
%\end{split}
%\end{aligned}
\end{align}
where the first equality follows from \eqref{GH1} with $j = 2$, the second equality follows from \eqref{DM} and \eqref{ex2}, the third equality follows from \eqref{DM}, \eqref{eq:FKE-alt}, and \eqref{eq:HEF}, the fourth equality follows from \eqref{0G} with $p(\cdot) = \delta_x(\cdot)$ and $w = u$ and \eqref{GH1} with $j = 1$, and the last one follows from \eqref{ex1}. Thus, \eqref{eq:FKE-alt} and \eqref{2} imply \eqref{int-FKE}.

\emph{Sufficiency.} Assume that the function ${P}$ satisfies \eqref{int-FKE} for all $t \in ]u,s[$ and $B \in \B(C)$.   As follows from Lemma~\ref{Cor}, it is sufficient to show that \eqref{eq:FKE} holds for all $B \in \B(C).$
In view of equality \eqref{ex1}, formula~\eqref{int-FKE} can be rewritten as
\begin{equation}
\label{int-FKEs}
{P}(u,x;t,B) = {\bf I}\{x \in B\}  e^{-\int_u^t q(x, \theta)d\theta} + \int_u^t \int_B e^{-\int_v^t q(y,\theta)d\theta} G^{(1)}(v,dy) dv.
\end{equation}
Let $t \in ]u,s[$ and $B \in \B(C).$ Since
\begin{equation}
\label{0G1}
\int_w^t q(y,v)e^{-\int_w^v q(y,\theta)d\theta} dv = 1- e^{-\int_w^t q(y,\theta)d\theta}, \qquad y \in {\bf X},\ w \in [u,t[,
\end{equation}
it follows from \eqref{0G1} and Fubini's theorem that, for any measure $p(\cdot)$ on $(C, \B(C)),$ %and $w\in [u,t[$,
\begin{equation}
\label{exp1}
\int_B (1- e^{-\int_w^t q(y,\theta)d\theta})p(dy) = \int_w^t \int_B q(y,v) e^{-\int_w^v q(y,\theta)d\theta}p(dy) dv, \quad w \in [u,t[.
\end{equation}
Observe that formula~\eqref{exp1} differs from \eqref{0G}.  Next,
\begin{multline}
\label{22}
 \int_u^t  G^{(1)}(w,B) dw - \int_u^t \int_B e^{-\int_w^t q(y,\theta)d\theta} G^{(1)}(w,dy) dw  \\
\begin{aligned}
&= \int_u^t \left (\int_w^t \int_B q(y,v)  e^{-\int_w^v q(y,\theta) d\theta} G^{(1)}(w,dy)dv \right)  dw\\
&= \int_u^t \int_u^v \left (\int_B q(y,v)  e^{-\int_w^v q(y,\theta) d\theta} G^{(1)}(w,dy) \right)dw dv\\
&= \int_u^t \left(\int_B  q(y,v) \int_u^v   e^{-\int_w^v q(y,\theta) d\theta} G^{(1)}(w,dy) dw \right) dv,
\end{aligned}
\end{multline}
where the first equality follows from  \eqref{exp1} with $p(\cdot) = G^{(1)}(w,\cdot)$, the second equality is obtained by interchanging the order of integration in $w$ and $v$, and the last one is obtained from \eqref{eq:exchange} by setting $(Z,{\bf S}, \mu)=(]u,v[,\B(]u,v[),\lambda),$ where $\lambda$ is the Lebesgue measure, $(Y, {\bf T}):=(C,\B(C))$, $\nu_z(B) = \int_B e^{-\int_z^v q(y, \theta)d\theta}G^{(1)}(z,dy)$, which, in view of   equality~\eqref{int-FKEs} and the property that the function $P$ takes values in $[0, \infty[,$ is finite for $B=C$ and for almost every $z\in Z$,  and  $g(y) = q(y,v){\bf I}\{y \in B\}.$ Therefore,
%from \eqref{int-FKE}, \eqref{ex1}, \eqref{0}, and \eqref{2},
\begin{multline*}
\begin{aligned}
{P}(u,x;t,B) &= {\bf I}\{x \in B\}  - \int_u^t \int_B q(y,v) e^{-\int_u^v q(y,\theta)d\theta} \delta_x(dy) dv \\
&+ \int_u^t G^{(1)}(w,B)dw - \int_u^t \left(\int_B  q(y,v) \int_u^v   e^{-\int_w^v q(y,\theta) d\theta} G^{(1)}(w,dy) dw \right) dv \\
%= {\bf I}\{x \in B\} + \int_u^t G^{(1)}(w,B)dw  - \int_u^t G^{(2)}(w,B)dw
&= {\bf I}\{x \in B\} + \int_u^t G^{(1)}(w,B)  - \int_u^t \int_B q(y,v){P}(u,x;v,dy)dv,
\end{aligned}
\end{multline*}
where the first equality follows from \eqref{int-FKEs}, \eqref{exp1} with $p(\cdot) = \delta_x(\cdot)$,  and \eqref{22}, and the last one is obtained by substituting ${P}(u, x; v,dy)$ with \eqref{int-FKEs}. Thus, it follows from \eqref{G-1} and the above equality that \eqref{eq:FKE} holds for all $B \in \B(C).$
\qed
\end{proof}

\begin{proof} \emph{of Theorem~\ref{thm:intFKE}}
The sufficiency statement of the theorem follows immediately from Lemma~\ref{l:int-S}, and the necessity statement of the theorem follows from Lemma~\ref{l:int-S} and Lebesgue's monotone convergence theorem, as explained below.

{\it Necessity. } Assume that, for all $u \in [T_0, T_1[,$ $s \in ]u, T_1[,$ $x \in {\bf X}$, and $(q,s)$-bounded sets $C$,  properties  (i) and (ii)  stated in Theorem~\ref{thm:FKDE} hold for the function $P$. Assumption~\ref{ALB} and Lemma~\ref{l:A-eq}(a) imply that for each $s\in ]T_0,T_1[ $ there exist   $(q,s)$-bounded sets $B^s_1,B^s_2,\ldots$  such that $B^s_n\uparrow {\bf X}$ as $n \to \infty$.
%B^s_{n+1},$ $n=1,2,\ldots,$ and $\cup_{n=1}^\infty B^s_n={\bf X}.$
   Then, for all  $u \in [T_0, T_1[,$ $s \in ]u, T_1[,$ $t \in ]u,s[,$ $x \in {\bf X}$, and $B \in \B({\bf X}),$
\begin{multline}
\begin{aligned}
P(&u,x;t,B)=\lim_{n\to\infty} P(u,x;t,B\cap B^s_n) = \lim_{n\to\infty} {\bf I}\{x \in B\cap B^s_n\}e^{-\int_u^t q(x,\theta)d\theta} \\
&\qquad \qquad+ \lim_{n\to\infty}  \int_u^t \int_{\bf X} \int_{B\cap B^s_n} e^{-\int_w^t q(y,\theta)d\theta} q(z,w, dy\setminus \{z\}) P(u,x;w,dz)  dw\\
&= {\bf I}\{x \in B\}e^{-\int_u^t q(x,\theta)d\theta} + \int_u^t \int_{\bf X} \int_{B} e^{-\int_w^t q(y,\theta)d\theta} q(z,w, dy\setminus \{z\}) P(u,x;w,dz)  dw,
\end{aligned}
\end{multline}
where the first equality is correct since the sets $B_n^s \uparrow {\bf X}$ as $n \to \infty$, the second equality follows from Lemma~\ref{l:int-S}, and the last one follows from Lebesgue's monotone convergence theorem since the sets $B_n^s \uparrow {\bf X}$ as $n \to \infty$. Since the above equality holds for all $t \in ]u,s[$ for each $s \in ]u,T_1[$, formula~\eqref{int-FKE} holds for all  $u \in [T_0, T_1[,$ $t \in ]u,T_1[,$ $x \in {\bf X}$, and $B \in \B({\bf X}).$ \qed
\end{proof}

\begin{proof} \emph{of Theorem~\ref{thm:FKDE}}  In view of Lemma~\ref{lem:FKDE-sol}, we need  to prove only the minimality and uniqueness properties of
$\bar P$  among functions from $\hat{\cal P}$  satisfying  properties (i) and (ii) stated in Theorem~\ref{thm:FKDE}.
%The proof of minimality and uniqueness of $\bar{P}$ is similar to the proof of Theorem~4.3 in Feinberg et al.~\cite{FMS}. We provide it here for completeness.
Let $ P$ be a function from $\hat{\cal P}$ satisfying these properties.
%from $\hat{\cal P}$ that satisfies the two properties described in Theorem~\ref{thm:FKDE}.
Let $u \in [T_0, T_1[,$ $t \in ]u, T_1[,$ $x \in {\bf X}$, and $B \in \B({\bf X}).$ In view of Theorem~\ref{thm:intFKE}, formula~\eqref{int-FKE} holds. Since the last term in \eqref{int-FKE} is non-negative,
\[{P}(u,x;t,B) \ge {\bf I}\{x \in B\}e^{-\int_u^t q(x, \theta)d\theta} = \bar{P}^{(0)}(u,x;t,B),\]
where the last equality is \eqref{b0}. Assume that for some $n=0,1,\ldots,$
\begin{equation}\label{eq:indn}
{P}(u,x;t,B) \ge \sum_{m=0}^n\bar{P}^{(m)}(u,x;t,B).
\end{equation}
Then, from \eqref{bn-alt}, \eqref{int-FKE}, and \eqref{eq:indn}, $ {P}(u,x;t,B) \ge \sum_{m=0}^{n+1}\bar{P}^{(m)}(u,x;t,B).$ Thus, by induction, \eqref{eq:indn} holds for all $n=0,1,\ldots\ .$   Let $n\to\infty.$ Then \eqref{eq:indn} and \eqref{def} imply that  ${P}(u,x;t,B) \ge \bar{P}(u,x;t,B).$ Therefore, the function $\bar{P}$ is the minimal function from $\hat{\cal P}$ satisfying  properties (i) and (ii) stated in Theorem~\ref{thm:FKDE}.

In conclusion, let the transition function $\bar P$ be  regular.   If there is another function  ${ P,}$ which satisfies  properties (i) and (ii)  stated in Theorem~\ref{thm:FKDE} and takes values in $[0,1]$, then, since $\bar P$ is the minimal solution,  ${P}(u,x;t,B)> {\bar P}(u,x;t,B)$  for some  $u \in [T_0, T_1[,$ $x \in {\bf X},$ $t \in ]u, T_1[,$  and $B \in \B({\bf X}).$
In addition, ${P}(u,x;t,{\bf X}\setminus B)\ge {\bar P}(u,x;t,{\bf X}\setminus B).$  Therefore, ${P}(u,x;t,{\bf X})= {P}(u,x;t,B)+ {P}(u,x;t,{\bf X}\setminus B)> \bar P(u,x;t,B)+\bar P(u,x;t,{\bf X}\setminus B)=\bar P(u,x;t,{\bf X})=1,$ and the inequality ${P}(u,x;t,{\bf X})>1$ contradicts the property that ${P}$ takes values in $[0,1].$\qed
\end{proof}

Theorems~\ref{thm:FKDE} and \ref{thm:intFKE} imply the following two corollaries.

\begin{corollary}
\label{Cor-G}
Under Assumption~\ref{ALB}, the following statements hold:

(a) for all  $u \in [T_0, T_1[,$ $s \in ]u,T_1[,$ $x \in {\bf X},$ and $(q,s)$-bounded  sets $B,$ the function \\ $\bar{P}(u,x;t,B)$ satisfies \eqref{eq:FKE}. %for all $t \in ]u, s[$.

(b) the function $\bar{P}$ is the minimal function in $\hat{\cal P}$ for which statement~(a) holds. In addition, if the transition function $\bar{P}$ is regular, then $\bar{P}$ is the unique function in $\hat{\cal P}$ with values in $[0,1]$ for which statement~(a) holds.
\end{corollary}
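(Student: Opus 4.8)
The plan is to deduce both statements directly from Lemma~\ref{Cor}, which serves as the bridge between the integral form \eqref{eq:FKE} and the differential form of Kolmogorov's forward equation encoded in properties (i) and (ii) of Theorem~\ref{thm:FKDE}. The key observation is that Lemma~\ref{Cor}, applied for fixed $u \in [T_0,T_1[$, $s \in ]u,T_1[$, $x \in {\bf X}$, and a $(q,s)$-bounded set $B$, asserts that a function $P \in \hat{\cal P}$ satisfies \eqref{eq:FKE} if and only if it satisfies the boundary condition \eqref{BC2}, is absolutely continuous in $t \in ]u,s[$, and satisfies \eqref{eq:FKDE} for almost every $t \in ]u,s[$. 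Quantifying these equivalent conditions over all admissible $u$, $s$, $x$, and $(q,s)$-bounded $B$, the right-hand side is precisely the conjunction of properties (i) and (ii) from Theorem~\ref{thm:FKDE}. Hence, for a function $P \in \hat{\cal P}$, the assertion that $P$ satisfies statement~(a) (that is, \eqref{eq:FKE} for all such $u$, $s$, $x$, $B$) is equivalent to the assertion that $P$ satisfies properties (i) and (ii) of Theorem~\ref{thm:FKDE}.

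To prove statement~(a), I would recall that Lemma~\ref{lem:FKDE-sol} establishes that $\bar{P}$ satisfies both properties (i) and (ii) of Theorem~\ref{thm:FKDE}. Applying Lemma~\ref{Cor} in the forward direction for each fixed $u$, $s$, $x$, and $(q,s)$-bounded $B$ then yields that $\bar{P}(u,x;t,B)$ satisfies \eqref{eq:FKE}, which is exactly statement~(a).

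For statement~(b), I would invoke the equivalence established in the first paragraph: the family of functions in $\hat{\cal P}$ for which statement~(a) holds coincides with the family of functions in $\hat{\cal P}$ satisfying properties (i) and (ii) of Theorem~\ref{thm:FKDE}. Theorem~\ref{thm:FKDE} already establishes that $\bar{P}$ is the minimal function in the latter family and, when $\bar{P}$ is regular, the unique function in this family taking values in $[0,1]$. Since the two families coincide, these minimality and uniqueness properties transfer verbatim to the family described in statement~(a), completing the proof.

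Since each step is a direct appeal to a previously established result, I do not anticipate any substantive obstacle; the only point requiring care is the bookkeeping of quantifiers. Lemma~\ref{Cor} is stated for fixed parameters $u$, $s$, $x$, $B$, whereas properties (i), (ii) and statement~(a) are universally quantified over these parameters, so the equivalence must be applied pointwise and the universal quantifier reintroduced afterwards.
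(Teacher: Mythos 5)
Your proposal is correct and follows essentially the same route as the paper: the published proof likewise uses Lemma~\ref{Cor} to identify the class of functions satisfying statement~(a) with the class satisfying properties (i) and (ii) of Theorem~\ref{thm:FKDE}, and then transfers the minimality and uniqueness conclusions from that theorem. Your additional explicit appeal to Lemma~\ref{lem:FKDE-sol} for part~(a) is already subsumed in Theorem~\ref{thm:FKDE}, so there is no substantive difference.
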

\begin{proof}
In view of Lemma~\ref{Cor}, any  function $P$ from ${\hat{\cal P}}$ satisfies statement (a) of the corollary if and only if it satisfies properties (i) and (ii)  stated in  Theorem~\ref{thm:FKDE}.  Thus, the corollary follows from Theorem~\ref{thm:FKDE}. \qed
\end{proof}

\begin{corollary}
\label{cor:intFKE}
Let Assumption~\ref{ALB} hold. %For all $u \in [T_0, T_1[,$   $s \in ]u,T_1[,$  $x \in X,$ and  $(q,s)$-bounded  sets $B,$ the two
 The function $\bar P$ is the minimal function $P$ in  $\hat{\cal P}$ satisfying   equality~\eqref{int-FKE} for all $u \in [T_0, T_1[,$   $t \in ]u,T_1[,$  $x \in {\bf X},$ and  $B \in \B({\bf X}).$ In addition, if the transition function $\bar{P}$ is regular, then $\bar{P}$ is the unique function in $\hat{\cal P}$ with values in $[0,1]$ satisfying  equality~\eqref{int-FKE} for all $u \in [T_0, T_1[,$   $t \in ]u,T_1[,$  $x \in {\bf X},$ and  $B \in \B({\bf X}).$
 \end{corollary}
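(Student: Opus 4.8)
The plan is to deduce this corollary directly from the equivalence established in Theorem~\ref{thm:intFKE} together with the minimality and uniqueness assertions of Theorem~\ref{thm:FKDE}; no new analysis is required. The essential point is that these two theorems, taken together, identify the family of functions in $\hat{\cal P}$ satisfying equality~\eqref{int-FKE} with the family of functions satisfying properties (i) and (ii) of Theorem~\ref{thm:FKDE}, and then transfer the extremal properties of $\bar P$ from the latter description to the former.

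First I would record that, by Theorem~\ref{thm:intFKE}, a function $P\in\hat{\cal P}$ satisfies equality~\eqref{int-FKE} for all $u \in [T_0, T_1[,$ $t \in ]u,T_1[,$ $x \in {\bf X},$ and $B \in \B({\bf X})$ if and only if it satisfies properties (i) and (ii) stated in Theorem~\ref{thm:FKDE}. Thus the set of functions in $\hat{\cal P}$ obeying~\eqref{int-FKE} coincides with the set of functions in $\hat{\cal P}$ obeying (i) and (ii). In particular, applying Theorem~\ref{thm:intFKE} to $\bar P$, which satisfies (i) and (ii) by Lemma~\ref{lem:FKDE-sol}, shows that $\bar P$ itself satisfies~\eqref{int-FKE}, so $\bar P$ belongs to this family.

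Next I would invoke the minimality clause of Theorem~\ref{thm:FKDE}: $\bar P$ is the minimal function in $\hat{\cal P}$ satisfying (i) and (ii). Since the two families coincide, $\bar P$ is also the minimal function in $\hat{\cal P}$ satisfying~\eqref{int-FKE}, which is the first assertion. For the second assertion, suppose $\bar P$ is regular. The uniqueness clause of Theorem~\ref{thm:FKDE} states that $\bar P$ is then the unique function in $\hat{\cal P}$ with values in $[0,1]$ satisfying (i) and (ii); again by the equivalence, $\bar P$ is the unique function in $\hat{\cal P}$ with values in $[0,1]$ satisfying~\eqref{int-FKE}.

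Because the whole argument is a translation through the equivalence of Theorem~\ref{thm:intFKE}, there is no genuine obstacle; the only point requiring care is to confirm that the ranges of the quantifiers in the two theorems match, so that ``satisfies~\eqref{int-FKE} for all $u,t,x,B$'' is literally the same condition that Theorem~\ref{thm:intFKE} proves equivalent to (i) and (ii), and that $\bar P$ indeed lies in $\hat{\cal P}$, as noted in the paragraph introducing $\hat{\cal P}$.
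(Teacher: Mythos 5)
Your proposal is correct and follows exactly the paper's own argument: the paper proves this corollary in one line by citing Theorems~\ref{thm:FKDE} and \ref{thm:intFKE}, and your write-up simply makes explicit the transfer of minimality and uniqueness through the equivalence of Theorem~\ref{thm:intFKE}. No issues.
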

 \begin{proof}
  The  corollary follows from Theorems~\ref{thm:FKDE} and \ref{thm:intFKE}. \qed
 \end{proof}

\section{Kolmogorov's forward equation for $Q$-functions bounded at each state}
\label{S-GB}

This section  provides additional results  on Kolmogorov's forward equation when Assumption~\ref{LB} holds.  Under  Assumption~\ref{LB} Kolmogorow's forward  equation is studied in  Feinberg et al. \cite[Theorems~4.1, 4.3]{FMS}, and Corollary~\ref{CORR2} is a more general statement than  \cite[Theorem 4.3]{FMS}.   In addition,  Corollary~\ref{Cor-P}  describes
the minimality property  of the function $\bar P(T_0, x;t,B)$  that  is useful for applications to continuous-time Markov decision processes.  The following lemma and its corollary do not require any of Assumptions~\ref{Feller}-\ref{L1}.

\begin{lemma}
\label{LEMMABT}
Let $u\in [T_0,T_1[,$ $x\in{\bf X},$ and $B$ be a $q$-bounded set. A function $P \in \hat{\cal P}$ satisfies Kolmogorov's forward equation~\eqref{eq:FKDE} for almost every $t\in ]u,s[$ for all $s\in ]u,T_1[$ if and only if it satisfies this equation for almost every $t\in ]u,T_1[.$
\end{lemma}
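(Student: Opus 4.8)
The plan is to reduce the equivalence to a single observation: whether Kolmogorov's forward equation~\eqref{eq:FKDE} holds at a given time $t\in]u,T_1[$ is a property of the point $t$ alone, once $u$, $x$, and $B$ are fixed, and is insensitive to the auxiliary endpoint $s$. The left-hand side of~\eqref{eq:FKDE} is the two-sided derivative of the function $t\mapsto P(u,x;t,B)$ at the interior point $t$ of $]u,T_1[$, which depends only on the values of $P$ in an arbitrarily small neighbourhood of $t$; the two integrals on the right-hand side are evaluated at the single time $t$. Hence neither side references $s$, and the set of times at which~\eqref{eq:FKDE} fails is the same no matter which interval $]u,s[$ or $]u,T_1[$ we view it in.

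Here is where the hypothesis that $B$ is $q$-bounded enters. By Definitions~\ref{defXnt} and~\ref{defbbf}, $M:=\sup_{y\in B,\,\theta\in[T_0,T_1[}q(y,\theta)<\infty$, so the first integral on the right-hand side of~\eqref{eq:FKDE} satisfies $\int_B q(y,t)P(u,x;t,dy)\le M\,P(u,x;t,B)<\infty$ for every $t\in]u,T_1[$, because $P(u,x;t,\cdot)$ is a finite measure for $P\in\hat{\cal P}$. Thus the pointwise notion ``\eqref{eq:FKDE} holds at $t$'' is well posed uniformly on all of $]u,T_1[$, and I may speak of a single failure set $F\subseteq]u,T_1[$ consisting of those $t$ at which~\eqref{eq:FKDE} does not hold.

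With $F$ in hand the two implications are routine. If~\eqref{eq:FKDE} holds for almost every $t\in]u,T_1[$, then $F$ is contained in a Lebesgue-null set, and for each $s\in]u,T_1[$ the subset $F\cap]u,s[$ is null as well, so~\eqref{eq:FKDE} holds for almost every $t\in]u,s[$; this is the easy direction. For the converse, I would fix a sequence $s_k\uparrow T_1$ with $s_k\in]u,T_1[$. The hypothesis says each $F\cap]u,s_k[$ is null, and since $]u,T_1[=\bigcup_k]u,s_k[$ we have $F=\bigcup_k\bigl(F\cap]u,s_k[\bigr)$, a countable union of null sets; therefore $F$ is null and~\eqref{eq:FKDE} holds for almost every $t\in]u,T_1[$. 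Equivalently, $\lambda(F)=\lim_{k\to\infty}\lambda(F\cap]u,s_k[)=0$ by continuity of Lebesgue measure $\lambda$ from below.

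The main obstacle is conceptual rather than computational: it is the verification, carried out in the first two paragraphs, that satisfaction of~\eqref{eq:FKDE} at a point does not depend on $s$, together with the use of $q$-boundedness of $B$ to keep the defining integral finite across the entire interval $]u,T_1[$. Once this intrinsic, pointwise interpretation is secured, the equivalence is nothing more than the elementary fact that a property holding almost everywhere on each member of an increasing exhaustion $]u,s_k[\uparrow]u,T_1[$ holds almost everywhere on the union.
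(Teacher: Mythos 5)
Your proof is correct and follows essentially the same route as the paper: the easy direction is restriction of an almost-everywhere property to a subinterval, and the converse takes a sequence $s_k\uparrow T_1$ and observes that the exceptional set is a countable union of Lebesgue-null sets, hence null. The extra care you take in the first two paragraphs to confirm that satisfaction of~\eqref{eq:FKDE} at a point $t$ is independent of the auxiliary endpoint $s$ is a reasonable addition but not a departure from the paper's argument.
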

%
%A function $P \in \hat{\cal P}$ satisfies for $q$-bounded sets $B$ property (ii) stated in Theorem~\ref{thm:FKDE}  if and only if, for all $u \in [T_0, T_1[$, $x \in {\bf X}$, and  %$q$-bounded set $B$,   the function $P(u,x;t,B)$ satisfies Kolmogorov's forward equation~\eqref{eq:FKDE} for almost every $t\in ]u,T_1[.$
%
\begin{proof}
The sufficiency statement of the lemma is straightforward since $]u,s[\subset ]u,T_1[$  when $s\in ]u,T_1[.$ Let a function $P \in \hat{\cal P}$ satisfy Kolmogorov's forward equation~\eqref{eq:FKDE} for almost every $t\in ]u,s[$ for all $s\in ]u,T_1[,$ where $u \in [T_0, T_1[,$ $x \in {\bf X},$ and $B \in \B({\bf X})$ is a $q$-bounded set. %Let  property (ii) stated in Theorem~\ref{thm:FKDE} hold for the $q$-bounded set $B$.
Consider an arbitrary  sequence $s_n\uparrow T_1$ as $n\to\infty$ with $s_1>u.$  Let $Y$ be the set of all $t\in ]u,T_1[$ such that \eqref{eq:FKDE} does not hold at point $t.$ %It follows from property (ii)  in Theorem~\ref{thm:FKDE} that
For $n = 1,2,\ldots,$ the Lebesgue measure of the sets $Y\cap ]u,s_n[$ is 0 since each of these sets consists of points $t\in ]u,s_n[$ at which \eqref{eq:FKDE} does not hold. This implies that the Lebesgue measure of the set $Y$ is $0$. Therefore, the function $P$ satisfies Kolmogorov's forward equation for almost every $t \in ]u,T_1[.$ \qed
\end{proof}

\begin{corollary}\label{LEMMAB}
A function $P \in \hat{\cal P}$ satisfies for $q$-bounded sets $B$ properties (i) and (ii) stated in Theorem~\ref{thm:FKDE}  if and only if the following two properties hold:
% it satisfies the following  two properties:

(a) for all $u \in [T_0, T_1[$, $x \in {\bf X}$, and  $q$-bounded sets $B$,   the  function $P(u,x;t,B)$ %is
satisfies the boundary condition \eqref{BC2} and is absolutely continuous in $t \in ]u,s[$ for each $s \in ]u,T_1[;$ % for $t \in ]u,T_1[;$}

(b) for all $u \in [T_0, T_1[$, $x \in {\bf X}$, and  $q$-bounded set $B$,   the function $P(u,x;t,B)$ satisfies Kolmogorov's forward equation~\eqref{eq:FKDE} for almost every $t\in ]u,T_1[.$
\end{corollary}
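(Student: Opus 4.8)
The plan is to prove the equivalence by separating the two constituent conditions—the boundary-condition-plus-absolute-continuity part and the forward-equation part—and matching each against the corresponding part of properties (a) and (b). The only substantive input beyond bookkeeping of quantifiers is Lemma~\ref{LEMMABT}, which handles the forward-equation part; everything else is a matter of correctly unpacking the definitions.

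First I would record the elementary observation that a $q$-bounded set is $(q,s)$-bounded for every $s \in ]T_0,T_1[$. Indeed, by Definitions~\ref{defXnt} and \ref{defbbf}, $B$ being $q$-bounded means that $q(y,t)$ is bounded on $B \times [T_0,T_1[$, and since $[T_0,s[ \subseteq [T_0,T_1[$, this boundedness persists on $B \times [T_0,s[$. Consequently, requiring property (i) of Theorem~\ref{thm:FKDE} only for $q$-bounded sets $B$ unpacks to: for all $u \in [T_0,T_1[$, all $x \in {\bf X}$, every $q$-bounded $B$, and every $s \in ]u,T_1[$, both the boundary condition \eqref{BC2} and the absolute continuity of $t \mapsto P(u,x;t,B)$ on $]u,s[$ hold. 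Because \eqref{BC2} does not depend on $s$, reordering the quantifiers shows this is verbatim property (a). Hence property (i) for $q$-bounded $B$ and property (a) are literally the same condition, so their equivalence is immediate.

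Next I would treat the forward-equation parts. Requiring property (ii) of Theorem~\ref{thm:FKDE} only for $q$-bounded sets $B$ asserts that, for all $u$, all $x$, every $q$-bounded $B$, and every $s \in ]u,T_1[$, equation \eqref{eq:FKDE} holds for almost every $t \in ]u,s[$. For fixed $u$, $x$, and $q$-bounded $B$, this is exactly the left-hand condition in Lemma~\ref{LEMMABT}, whose equivalent right-hand condition is that \eqref{eq:FKDE} holds for almost every $t \in ]u,T_1[$—which is precisely property (b). Thus Lemma~\ref{LEMMABT}, applied for each fixed $u$, $x$, and $q$-bounded $B$, yields the equivalence of property (ii) for $q$-bounded $B$ and property (b) in both directions: its necessity direction supplies property (b) from property (ii), and its straightforward sufficiency direction recovers property (ii) from property (b).

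Finally I would combine the two equivalences to conclude that $P$ satisfies properties (i) and (ii) for $q$-bounded sets $B$ if and only if it satisfies (a) and (b). There is no genuine analytic obstacle here; the only point demanding care is the bookkeeping of quantifiers—specifically, recognizing that property (i), which is stated in Theorem~\ref{thm:FKDE} for the $s$-dependent class of $(q,s)$-bounded sets, collapses to the single condition (a) once $B$ is taken to be $q$-bounded, so that it is $(q,s)$-bounded simultaneously for all $s$. All the real work has already been carried out in Lemma~\ref{LEMMABT}.
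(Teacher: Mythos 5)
Your proposal is correct and follows essentially the same route as the paper: the paper likewise observes that property (i) restricted to $q$-bounded sets coincides with property (a), and invokes Lemma~\ref{LEMMABT} to equate property (ii) for $q$-bounded sets with property (b). Your additional remark that a $q$-bounded set is $(q,s)$-bounded for every $s$ is the same implicit observation the paper relies on, just spelled out more carefully.
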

\begin{proof}
For $q$-bounded sets $B,$ property (i)  stated in Theorem~\ref{thm:FKDE} coincides with property (a) stated in the corollary. Lemma~\ref{LEMMABT} implies that property (ii) stated in Theorem~\ref{thm:FKDE} holds  for a  $q$-bounded set $B$ if and only if property (b) stated in the corollary holds. \qed
\end{proof}

\begin{lemma}\label{LEMMA5}
 Under  Assumption~\ref{LB}, a function  $P \in \hat{\cal P}$ satisfies properties (i) and (ii) stated in Theorem~\ref{thm:FKDE} if and only if it satisfies properties (a) and (b) stated in Corollary~\ref{LEMMAB}.

% the following two properties hold:
%
%(a) for all $u \in [T_0, T_1[$, $s \in ]u,T_1[,$
%$x \in X$, and  $q$-bounded sets $B$,   the  function $P(u,x;t,B)$ is
%absolutely continuous in $t \in ]u,s[$ and satisfies the boundary condition \eqref{BC2}; % for $t \in ]u,T_1[;$}
%
%(b) for all $u \in [T_0, T_1[$, $x \in X$, and  $q$-bounded set $B$,   the function $P(u,x;t,B)$ satisfies Kolmogorov's forward equation~\eqref{eq:FKDE} for almost every $t\in ]u,T_1[.$
\end{lemma}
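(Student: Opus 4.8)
The plan is to prove the equivalence of the two pairs of properties by comparing them set-class by set-class, using the fact that Assumption~\ref{LB} makes the relevant set classes coincide. First I would observe the crucial structural difference between the two statements: properties (i) and (ii) in Theorem~\ref{thm:FKDE} are quantified over $(q,s)$-bounded sets $B$ (which may vary with $s$), whereas properties (a) and (b) in Corollary~\ref{LEMMAB} are quantified over $q$-bounded sets, i.e. $(q,T_1)$-bounded sets. Under Assumption~\ref{LB}, every state $x$ has ${\bar q}(x)<\infty$, so by Lemma~\ref{l:A-eq}(a) there exist Borel sets $B_n\uparrow{\bf X}$ with $\sup_{x\in B_n}{\bar q}(x)<n$; each such $B_n$ is $q$-bounded, since $q(x,t)\le{\bar q}(x)<n$ on $B_n\times[T_0,T_1[$. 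The key reduction is therefore that the $q$-bounded sets form a sufficiently rich class: they increase to ${\bf X}$.

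The implication from Corollary~\ref{LEMMAB}'s properties (a),(b) to Theorem~\ref{thm:FKDE}'s properties (i),(ii) is the easy direction, since every $(q,s)$-bounded set is $q$-bounded (as $s<T_1$ gives $[T_0,s[\subseteq[T_0,T_1[$, so boundedness on the larger strip implies boundedness on the smaller one), and Corollary~\ref{LEMMAB} already packages the equivalence between the $]u,T_1[$-a.e. forward equation and the $]u,s[$-a.e. forward equation for all $s$ via Lemma~\ref{LEMMABT}. The harder direction is the converse: I would assume $P$ satisfies (i) and (ii) for all $(q,s)$-bounded sets and must deduce (a) and (b) for $q$-bounded sets. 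Given a $q$-bounded $B$, it is automatically $(q,s)$-bounded for every $s\in]u,T_1[$, so properties (i),(ii) of Theorem~\ref{thm:FKDE} apply directly to $B$ for each fixed $s$; this yields the boundary condition and absolute continuity on each $]u,s[$, which is property (a), and the forward equation for a.e. $t\in]u,s[$ for every $s$, which by Lemma~\ref{LEMMABT} upgrades to a.e.\ $t\in]u,T_1[$, giving property (b).

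The step I expect to be the genuine content rather than bookkeeping is verifying that a $q$-bounded set is indeed $(q,s)$-bounded for every $s$ and conversely that Assumption~\ref{LB} forces the two formulations to address the same collection of sets in a way compatible with the limiting argument; more precisely, the only subtlety is confirming that no information is lost when passing from the family of $(q,s)$-bounded sets (parametrized by $s$) to the single family of $q$-bounded sets. Since every $q$-bounded set is $(q,s)$-bounded for all $s<T_1$, the statement of Theorem~\ref{thm:FKDE} applied to this fixed $B$ for each $s$ delivers exactly what Corollary~\ref{LEMMAB} requires, and the main tool doing the real work is Lemma~\ref{LEMMABT}, which reconciles the ``for all $s$, a.e.\ on $]u,s[$'' formulation with the ``a.e.\ on $]u,T_1[$'' formulation. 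I would write the proof as two short implications, invoking Lemma~\ref{LEMMABT} for property (ii)$\Leftrightarrow$(b) and observing that property (i) restricted to $q$-bounded sets is literally property (a), so that under Assumption~\ref{LB} the two descriptions coincide.
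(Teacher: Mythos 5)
Your proposal has the crucial set-class inclusion backwards, and this breaks the direction that carries all the content of the lemma. A $(q,s)$-bounded set is one on which $q(x,t)$ is bounded over the strip $B\times[T_0,s[$, while a $q$-bounded set is one on which it is bounded over the larger strip $B\times[T_0,T_1[$. Hence every $q$-bounded set is $(q,s)$-bounded for each $s$, but not conversely: the family of $(q,s)$-bounded sets appearing in properties (i),(ii) of Theorem~\ref{thm:FKDE} is the \emph{larger} one. Your parenthetical justification (``boundedness on the larger strip implies boundedness on the smaller one'') is true but establishes exactly the opposite inclusion from the one you assert. The gap is not cosmetic: even under Assumption~\ref{LB} there are sets that are $(q,s)$-bounded for all $s<T_1$ yet not $q$-bounded. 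For instance, with ${\bf X}=\{1,2,\ldots\}$ and $q(n,t)=\min\{n,(T_1-t)^{-1}\}$ one has ${\bar q}(n)=n<\infty$, the whole space ${\bf X}$ is $(q,s)$-bounded for every $s\in\,]T_0,T_1[$, yet ${\bf X}$ is not $q$-bounded. Consequently your ``easy direction'' --- from properties (a),(b) of Corollary~\ref{LEMMAB} (stated only for $q$-bounded sets) to properties (i),(ii) of Theorem~\ref{thm:FKDE} (required for all $(q,s)$-bounded sets) --- cannot be obtained by set inclusion; it is precisely the hard implication. The direction you label ``harder'' is in fact the easy one, and your treatment of it (a $q$-bounded $B$ is $(q,s)$-bounded for every $s$, then reconcile the almost-everywhere statements via Lemma~\ref{LEMMABT}) is correct and matches the paper.

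What is missing is the mechanism for passing from the small class of sets to the large one. The paper's route is: from properties (a),(b) on $q$-bounded sets, Lemma~\ref{l:int-S} yields the integral equation \eqref{int-FKE} on $q$-bounded sets; Assumption~\ref{LB} together with Lemma~\ref{l:A-eq}(a) supplies $q$-bounded sets $B_n\uparrow{\bf X}$, so for an arbitrary $B\in\B({\bf X})$ the sets $B\cap B_n$ are $q$-bounded and Lebesgue's monotone convergence theorem extends \eqref{int-FKE} to $B$; finally the sufficiency half of Theorem~\ref{thm:intFKE} converts \eqref{int-FKE}, now known on all Borel sets, back into properties (i),(ii) for every $(q,s)$-bounded set. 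Without some such detour through the integral equation (or an equivalent extension argument), the implication you need does not follow.
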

\begin{proof}
Let the function $P$ satisfy properties (i) and (ii) stated in Theorem~\ref{thm:FKDE}.  Since a $q$-bounded set is $(q,s)$-bounded, it follows from Corollary~\ref{LEMMAB} that  properties (a)  and (b) stated in Corollary~\ref{LEMMAB} hold.

%Let the function $P$ satisfy properties (i) and (ii) stated in Theorem~\ref{thm:FKDE}. Assumption~\ref{LB} implies that every $q$-bounded set is $(q,s)$-bounded for all $s\in ]T_0,T_1].$ This \textbf{fact, Lemma 3, and property (i) stated} in Theorem~\ref{thm:FKDE} imply that property (a) stated in the lemma is correct. Property (ii) stated in Theorem~\ref{thm:FKDE} implies that,  for each $u \in [T_0, T_1[,$   $s \in ]u,T_1[,$  $x \in X,$ and  $q$-bounded  set $B,$
%the function $P(u,x;t,B)$ satisfies Kolmogorov's forward equation~\eqref{eq:FKDE}   for almost every  $t \in ]u,s[.$  Consider a sequence $s_n\uparrow T_1$ as $n\to\infty$ with $s_1>u.$ Since $\cup_{n=1}^\infty ]u,s_n[=]u,T_1[,$  the function $P(u,x;t,B)$ satisfies Kolmogorov's forward equation~\eqref{eq:FKDE}   for almost every  $t \in ]u,T_1[.$  Thus,  property (b) stated in the lemma is correct.

Let properties (a) and (b) stated in Corollary~\ref{LEMMAB} hold.  Fix arbitrary $u \in [T_0, T_1[$, $s \in ]u, T_1[$, and $x \in {\bf X}.$  Lemma~\ref{l:int-S} implies that for every $q$-bounded set $B$   equality~\eqref{int-FKE} holds for all $t\in ]u,s[.$ In view of Assumption~\ref{LB} and  Lemma~\ref{l:A-eq}(a), there exist $q$-bounded  sets $B_1,B_2,\ldots$ such that $B_n\subseteq B_{n+1},$ $n=1,2,\ldots,$ and ${\bf X}=\cup_{n=1}^\infty B_n.$   Let $B \in \B(X)$.   Then $B^n:=B_n\cap B, $ $n=1,2,\ldots,$ are $q$-bounded sets.  Therefore,  for each set $B^n,$ equality~\eqref{int-FKE} holds for all $t \in ]u,s[.$ Since %$B_n \uparrow {\bf X}$ as $n \to \infty$, we have
$B^n \uparrow B$ as $n \to \infty,$ Lebesgue's monotone convergence theorem implies that this formula also  holds for $B.$   Thus, in view of  Theorem~\ref{thm:intFKE}, the function $P$ satisfies properties  (i) and (ii) stated in Theorem~\ref{thm:FKDE}.\qed
\end{proof}

The following corollary generalizes Feinberg et al.~\cite[Theorem 4.1]{FMS} since Assumption~\ref{ALB} is weaker than Assumption~\ref{LB}. We remark that absolute continuity in $t\in ]u,\infty[$ in \cite[Theorem 4.1(i)]{FMS}  is meant in the sense that for each $s\in ]u,\infty[$ the function is absolutely continuous in $t\in ]u,s[.$ For $T_1=\infty$ this is equivalent  to the absolutely continuity assumed in property (a) stated in  Corollary~\ref{LEMMAB}. For unbounded intervals, this type of absolute continuity  is sometimes  called local absolute continuity.

\begin{corollary} {\rm (cp. Feinberg et al.~\cite[Theorem 4.1]{FMS})}
\label{CORR1}
Let Assumption~\ref{ALB} hold.   Then,  the function $\bar P$  satisfies properties (a) and (b) stated in Corollary~\ref{LEMMAB}.  In addition,  property (a) stated in Corollary~\ref{LEMMAB} holds for all $B\in\B({\bf X}).$
\end{corollary}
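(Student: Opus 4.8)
The plan is to establish Corollary~\ref{CORR1} by decomposing its two assertions and reducing each to results already in hand. The corollary claims that under Assumption~\ref{ALB} the function $\bar P$ satisfies properties (a) and (b) of Corollary~\ref{LEMMAB}, and moreover that property (a) holds for \emph{all} Borel sets $B$, not merely $q$-bounded ones. First I would observe that Lemma~\ref{lem:FKDE-sol} already guarantees, under Assumption~\ref{ALB}, that $\bar P$ satisfies both property (i) (the boundary condition \eqref{BC2} together with absolute continuity in $t\in ]u,s[$) and property (ii) (Kolmogorov's forward equation \eqref{eq:FKDE}) as stated in Theorem~\ref{thm:FKDE}, for all $(q,s)$-bounded sets and all $s\in ]u,T_1[$. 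Since every $q$-bounded set is $(q,s)$-bounded for each $s$, the restriction of these properties to $q$-bounded sets is immediate.

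Next I would invoke Corollary~\ref{LEMMAB}, which asserts that a function in $\hat{\cal P}$ satisfies properties (i) and (ii) of Theorem~\ref{thm:FKDE} for $q$-bounded sets if and only if it satisfies properties (a) and (b) of Corollary~\ref{LEMMAB}. Applying this equivalence to $\bar P$, the fact just established—that $\bar P$ satisfies (i) and (ii) for $q$-bounded sets—yields at once that $\bar P$ satisfies properties (a) and (b) of Corollary~\ref{LEMMAB}. This disposes of the first assertion of the corollary without any new computation.

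The remaining, and genuinely additional, claim is that property (a)—namely the boundary condition \eqref{BC2} and local absolute continuity in $t$—holds for \emph{every} $B\in\B({\bf X})$, rather than only for $q$-bounded $B$. Here I would appeal directly to Lemma~\ref{lem:FKDE-sol}(a), whose statement is already formulated for an arbitrary $B\in\B({\bf X})$: it asserts precisely that for each $u$, each $s\in ]u,T_1[$, each $x$, and each $B\in\B({\bf X})$, the function $\bar P(u,x;t,B)$ satisfies \eqref{BC2} and is absolutely continuous in $t\in ]u,s[$. Since local absolute continuity on $]u,T_1[$ means exactly absolute continuity on $]u,s[$ for each $s\in ]u,T_1[$, this is the desired strengthening.

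The main subtlety, and the only point requiring care, is reconciling the quantifier structure: property (a) of Corollary~\ref{LEMMAB} requires absolute continuity in $t\in ]u,s[$ \emph{for each} $s\in ]u,T_1[$, and I must verify that Lemma~\ref{lem:FKDE-sol}(a) indeed supplies this for all $B$ and not just the boundary condition. The key observation is that Lemma~\ref{lem:FKDE-sol}(a) does not restrict $B$ to be $(q,s)$-bounded, so no analytic obstruction arises from unbounded jump rates in the boundary behavior; the failure illustrated in Example~\ref{ex} concerns the forward equation \eqref{eq:FKDE} itself (property (b)), where the integrals can diverge, and not the mere boundary condition or continuity of $\bar P(u,x;t,B)$ as a scalar function of $t$. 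Thus the extension of (a) to all $B$ is consistent with, and does not contradict, the necessity of restricting (b) to $q$-bounded sets.
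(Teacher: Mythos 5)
Your proof is correct and follows essentially the same route as the paper: it derives properties (i) and (ii) of Theorem~\ref{thm:FKDE} from Lemma~\ref{lem:FKDE-sol}, restricts to $q$-bounded sets, applies the equivalence in Corollary~\ref{LEMMAB}, and then cites Lemma~\ref{lem:FKDE-sol}(a) for the extension of property (a) to all $B\in\B({\bf X})$. The additional remarks about the quantifier structure and Example~\ref{ex} are accurate but not needed.
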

\begin{proof}
 In view of Lemma~\ref{lem:FKDE-sol}, the function $\bar{P}$ satisfies properties (i) and (ii) stated in Theorem~\ref{thm:FKDE}. In particular, it satisfies these properties for the smaller class of $q$-bounded sets. Thus, it follows from Corollary~\ref{LEMMAB} that the function $\bar{P}$ satisfies properties (a) and (b) stated in Corollary~\ref{LEMMAB}. In addition,  Lemma~\ref{lem:FKDE-sol}(a) implies that property (a) stated in Corollary~\ref{LEMMAB} holds for all $B\in\B({\bf X}).$  \qed
\end{proof}

The following corollary  generalizes %has a more formulation than
\cite[Theorem 4.3]{FMS}.  The difference is that Corollary~\ref{CORR2} states  that $\bar P$ is the minimal solution within the class of functions satisfying the weakly continuity property, when $B$ is a %that  are absolutely continuous
 $q$-bounded set, while  \cite[Theorem 4.3]{FMS} claims the minimality within the smaller class of functions
satisfying the weakly continuity property
%that are a absolutely continuous for
when  $B\in\B({\bf X}).$
\begin{corollary} \label{CORR2} {\rm (cp.  Feinberg et al.~\cite[Theorem 4.3]{FMS})}
Let Assumption~\ref{LB} hold.
Then $\bar P$ is the minimal function in $\hat{\cal P}$ satisfying  properties (a) and (b) stated in  Corollary~\ref{LEMMAB}.
Furthermore, if   the transition function $\bar{P}$ is regular,
then $\bar{P}$ is the unique element of $\hat{\cal P}$  taking values in $[0,1]$ and satisfying  properties (a) and (b) stated in  Corollary~\ref{LEMMAB}. %with $\bar P$ substituted by $\hat P.$
\end{corollary}
\begin{proof}
In view of Lemma~\ref{l:A-eq}(b), the corollary follows from Theorem~\ref{thm:FKDE} and Lemma~\ref{LEMMA5}.
\qed
\end{proof}

The following two corollaries from Corollary~\ref{CORR2} are useful for applying the results of this paper to continuous-time jump Markov decision processes; see Feinberg et al.~\cite[Theorem~3.2]{FMS1}.

\begin{corollary}
\label{Cor-S}
Under Assumption~\ref{LB}, the following statements hold:

(a) for all  $u \in [T_0, T_1[,$ $x \in {\bf X},$ and $q$-bounded sets $B \in \B({\bf X}),$ the function $\bar{P}(u,x;t,B)$ satisfies  the equality in formula~\eqref{eq:FKE} for all $t \in ]u,T_1[.$ %for all $s \in ]u, T_1[$.

(b) the function $\bar{P}$ is the minimal function in $\hat{\cal P}$ for which statement~(a) holds. In addition, if the transition function $\bar{P}$ is regular, then $\bar{P}$ is the unique function in $\hat{\cal P}$ with values in $[0,1]$ for which statement~(a) holds.
\end{corollary}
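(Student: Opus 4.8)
The plan is to reduce Corollary~\ref{Cor-S} to Corollary~\ref{CORR2} by proving that, for any function $P\in\hat{\cal P}$, statement~(a) of Corollary~\ref{Cor-S} is equivalent to $P$ satisfying properties~(a) and~(b) stated in Corollary~\ref{LEMMAB}. Granting this equivalence, both assertions are immediate: part~(a) holds because $\bar P$ satisfies properties~(a) and~(b) of Corollary~\ref{LEMMAB} by Corollary~\ref{CORR1}, and part~(b) follows because Corollary~\ref{CORR2} already identifies $\bar P$ as the minimal function in $\hat{\cal P}$ with those two properties, and as the unique $[0,1]$-valued such function when $\bar P$ is regular.

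To establish the equivalence, I would fix $u\in[T_0,T_1[$, $x\in{\bf X}$, and a $q$-bounded set $B$, and note that $B$ is then $(q,s)$-bounded for every $s\in]u,T_1[$. Since the integrals in \eqref{eq:FKE} extend only up to $t$, the equality \eqref{eq:FKE} holding for all $t\in]u,T_1[$ is the same as its holding for all $t\in]u,s[$ for each $s\in]u,T_1[$, because every $t\in]u,T_1[$ lies in some interval $]u,s[$ with $s>t$. Applying Lemma~\ref{Cor} on each interval $]u,s[$ then turns ``\eqref{eq:FKE} for all $t\in]u,s[$'' into the conjunction of the boundary condition \eqref{BC2}, absolute continuity of $P(u,x;\cdot,B)$ on $]u,s[$, and Kolmogorov's forward equation \eqref{eq:FKDE} for almost every $t\in]u,s[$.

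It remains to repackage these ``for each $s$'' statements into the global formulation of Corollary~\ref{LEMMAB}. The boundary condition together with absolute continuity on $]u,s[$ for every $s$ is exactly property~(a) of Corollary~\ref{LEMMAB}, and Lemma~\ref{LEMMABT} is precisely the device that upgrades ``\eqref{eq:FKDE} for almost every $t\in]u,s[$ for every $s$'' to ``\eqref{eq:FKDE} for almost every $t\in]u,T_1[$'', which is property~(b). As $u$, $x$, and the $q$-bounded set $B$ were arbitrary, and as Lemma~\ref{Cor} and Lemma~\ref{LEMMABT} hold for arbitrary functions in $\hat{\cal P}$, this yields the sought equivalence for every $P\in\hat{\cal P}$, and Corollaries~\ref{CORR1} and~\ref{CORR2} then complete the proof.

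The main point requiring care, rather than a genuine difficulty, is the passage between the local-in-$t$ formulation (valid on each $]u,s[$) and the global-in-$t$ formulation (valid on $]u,T_1[$): one must verify both directions of the equivalence and check that restricting a $q$-bounded set's interval of validity to $]u,s[$ loses no information, which is exactly where the observation that a $q$-bounded set is $(q,s)$-bounded for all $s$ and the measure-zero argument of Lemma~\ref{LEMMABT} are used. Everything else is a direct appeal to the already-proved Lemma~\ref{Cor}, Corollary~\ref{LEMMAB}, and Corollary~\ref{CORR2}.
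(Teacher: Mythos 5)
Your proposal is correct and follows essentially the same route as the paper: the paper's proof also establishes that statement~(a) of Corollary~\ref{Cor-S} is equivalent, via Lemma~\ref{Cor} and Corollary~\ref{LEMMAB} (whose proof is exactly the Lemma~\ref{LEMMABT} device you invoke), to properties~(a) and~(b) of Corollary~\ref{LEMMAB}, and then concludes by Corollary~\ref{CORR2}. Your write-up merely unpacks the local-to-global step in more detail than the paper does.
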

\begin{proof}
 Lemma~\ref{Cor} and Corollary~\ref{LEMMAB} imply that statement~(a) of the corollary holds for a function $P$ from $\hat{ \cal P}$ if and only if the function $P$ satisfies properties (a) and (b) stated in Corollary~\ref{LEMMAB}. Therefore, this corollary follows from Corollary~\ref{CORR2}.
\end{proof}

When $x$ is fixed and $u = T_0$,  formula~\eqref{eq:FKE} is an equation in two variables $t$ and $B$. Hence, for simplicity, we write $P(t,B)$ instead of $P(T_0,x;t,B)$ in \eqref{eq:FKE} for any function $P$ from $\hat{\cal P}$ when $x$ is fixed and $u = T_0,$ and \eqref{eq:FKE} becomes
% for any function $P$ on the domain of $\bar{P}$ when $x$ is fixed and $u = T_0$, and \eqref{eq:FKE} will be given as
\begin{equation}
\label{eq:FKE1}
P(t,B) = I\{x \in B\} + \int_{T_0}^{t} ds \int_{\bf X} q(y,s,B \setminus \{y\}) P(s, dy) -\int_{T_0}^{t} ds \int_{B} q(y,s) P(s,dy).
\end{equation}
For fixed $x \in {\bf X}$ and $u = T_0$, the function $\bar{P}(t, \cdot)$ is the marginal probability distribution %on the state
of the  process $\{\BB{X}_t: t \in [T_0, T_1[\}$ at time $t$ given $\BB{X}_{T_0} = x$. Under Assumption~\ref{LB}, the following corollary describes the minimal solution of \eqref{eq:FKE1} and provides a sufficient condition for its uniqueness.
\begin{corollary}
\label{Cor-P}
Fix an arbitrary  $x \in {\bf X}$. Under Assumption~\ref{LB}, the following statements hold:

(a) for all $t \in ]T_0, T_1[$ and $q$-bounded sets $B \in \B({\bf X}),$ the function $\bar{P}(t,B)$ satisfies \eqref{eq:FKE1};

(b)  $\bar{P}(t,B),$ where $t\in ]T_0,T_1[$ and $B\in \B({\bf X}),$ is the minimal non-negative function that is a measure on $({\bf X},\B({\bf X}))$ for fixed $t$, is measurable in $t$ for fixed $B$, and for which statement~(a) holds. In addition, if   the function $q(z,t)$ is bounded on the set ${\bf X} \times [T_0, T_1[$, then $\bar{P}(t,B)$  is the unique non-negative function  with values in $[0,1]$ and satisfying the conditions stated in the first sentence of this statement.
\end{corollary}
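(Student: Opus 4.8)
The plan is to reduce Corollary~\ref{Cor-P} to the already-established Corollary~\ref{Cor-S} by specializing $u = T_0$ and fixing $x \in {\bf X}$, and then to handle the additional uniqueness assertion separately, since its hypothesis ($q(z,t)$ bounded on ${\bf X} \times [T_0, T_1[$) is strictly stronger than Assumption~\ref{LB}. First I would observe that statement~(a) of Corollary~\ref{Cor-P} is literally statement~(a) of Corollary~\ref{Cor-S} written in the abbreviated notation $P(t,B) = P(T_0,x;t,B)$, so equality~\eqref{eq:FKE1} is equation~\eqref{eq:FKE} at $u = T_0$. Thus (a) holds immediately by Corollary~\ref{Cor-S}(a).

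For the minimality claim in (b), the subtle point is that Corollary~\ref{Cor-S} concerns functions in $\hat{\cal P}$, that is, functions $P(u,x;t,B)$ of all four arguments, whereas Corollary~\ref{Cor-P} fixes $x$ and $u = T_0$ and speaks of functions $P(t,B)$ that are measures in $B$ and measurable in $t$. I would argue that any such two-variable candidate function $P(t,B)$ satisfying \eqref{eq:FKE1} for all $q$-bounded $B$ extends trivially to a member of $\hat{\cal P}$ by a direct comparison: the minimality proof in Corollary~\ref{Cor-S}, which ultimately rests on the induction in the proof of Theorem~\ref{thm:FKDE} using \eqref{bn-alt} and \eqref{int-FKE}, only ever uses the values $P(T_0,x;w,\cdot)$ for $w \in ]T_0,t[$ with the fixed $x$. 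Hence the same induction applied directly to $P(t,B)$ shows $P(t,B) \ge \sum_{m=0}^n \bar{P}^{(m)}(T_0,x;t,B)$ for each $n$, and letting $n \to \infty$ via \eqref{def} gives $P(t,B) \ge \bar{P}(t,B)$. So $\bar{P}(t,\cdot)$ is minimal in the stated two-variable class.

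The main obstacle is the uniqueness statement, because its hypothesis differs from regularity of $\bar{P}$, which is what Corollary~\ref{Cor-S}(b) assumes. The plan here is to show that boundedness of $q(z,t)$ on ${\bf X} \times [T_0,T_1[$ forces $\bar{P}$ to be regular, after which the result follows from the two-variable minimality just proved together with the mass-conservation argument used at the end of the proof of Theorem~\ref{thm:FKDE}. Concretely, if $q(z,t) \le K$ for all $z,t$, then Assumption~\ref{Feller} holds with every $B_n = {\bf X}$, so ${\bf X}$ itself is $q$-bounded and $\bar{q}(x) \le K$ uniformly; under such uniform boundedness the jump Markov process $\{\BB{X}_t\}$ is non-explosive (the sojourn times are stochastically bounded below by $\mathrm{Exp}(K)$ variables, so $t_\infty = T_1$ almost surely), whence $\bar{P}(T_0,x;t,{\bf X}) = 1$ for all $t$, i.e. $\bar{P}$ is regular. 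Given regularity, I would suppose for contradiction that another admissible $P(t,B)$ with values in $[0,1]$ satisfies \eqref{eq:FKE1}, take $B = {\bf X}$ (now $q$-bounded), and combine $P(t,{\bf X}) = P(t,B) + P(t,{\bf X}\setminus B) \ge \bar{P}(t,B) + \bar{P}(t,{\bf X}\setminus B) = \bar{P}(t,{\bf X}) = 1$ with strictness wherever $P > \bar{P}$ to derive $P(t,{\bf X}) > 1$, contradicting the range $[0,1]$. The one step requiring care is the non-explosion argument establishing regularity; rather than reprove it I would cite the regularity of $\bar{P}$ under uniform boundedness as following from Theorem~\ref{thm:JMP} together with the standard fact that a jump process with uniformly bounded total jump rate does not explode on a finite horizon.
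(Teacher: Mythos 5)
Your proposal is correct, and part (a) and the final contradiction step match the paper, but you diverge from the paper's proof at the two substantive points. For minimality, the paper does exactly what you call the ``trivial extension'' but then does \emph{not} re-run the induction: it defines the patched function $f(u,z;t,B)$ in \eqref{sol-alt}, equal to $P(t,B)$ at $(u,z)=(T_0,x)$ and to $\bar P(u,z;t,B)$ elsewhere, checks that $f\in\hat{\cal P}$ satisfies the hypothesis of Corollary~\ref{Cor-S}(a) (which is a pointwise condition in $(u,z)$), and invokes Corollary~\ref{Cor-S}(b) as a black box to get $P(t,B)=f(T_0,x;t,B)\ge\bar P(T_0,x;t,B)$. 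Your alternative --- re-tracing the chain from \eqref{eq:FKE1} through \eqref{int-FKE} to the induction on $\sum_{m=0}^n\bar P^{(m)}$ at the fixed slice $(T_0,x)$ --- is sound because every lemma in that chain (Lemmas~\ref{Cor}, \ref{l:int-S}, \ref{LEMMA5}, Theorem~\ref{thm:intFKE}) is stated or proved pointwise in $(u,x)$, but it obliges you to verify this localization for each step, which the patching trick avoids entirely. For the uniqueness claim, the paper establishes regularity of $\bar P$ analytically: when $q(z,t)$ is bounded on ${\bf X}\times[T_0,T_1[$, the set ${\bf X}$ is $q$-bounded, so \eqref{eq:FKE} applies with $B={\bf X}$, and conservativeness ($q(y,w,{\bf X}\setminus\{y\})=q(y,w)$) makes the two finite integrals cancel, giving $\bar P(u,z;t,{\bf X})=1$ directly; it then quotes the uniqueness half of Corollary~\ref{Cor-S}(b) rather than redoing the mass-conservation contradiction. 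Your probabilistic route via Theorem~\ref{thm:JMP} and non-explosion of a process with uniformly bounded jump rate also works, but it imports a fact external to the forward-equation machinery the paper is using at this point, whereas the analytic cancellation is a one-line consequence of statement~(a) itself.
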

\begin{proof}
Statement (a) of the corollary follows immediately from Corollary~\ref{Cor-S}(a) when $u = T_0$. To prove  statement (b),  consider a non-negative function $P(t,B)$, where $t \in ]T_0,T_1[$ and $B \in \B({\bf X}),$ that satisfies the conditions given in the first sentence of statement (b) of this corollary. Define the function $f(u,z;t,B) \in \hat{\cal P}$,
\begin{equation}
\label{sol-alt}
f(u,z ;t, B) =  \left \{ \begin{array}{ll}
P(t, B), & \quad \text{ if} \quad u = T_0 \text{ and } z = x, \\
\bar{P}(u,z;t,B), & \quad \text{ otherwise}.
\end{array}
\right.
\end{equation}
Then, it follows from Corollary~\ref{Cor-S}(a) and \eqref{sol-alt} that the function $f$ satisfies the property given in Corollary~\ref{Cor-S}(a).  Thus,  Corollary~\ref{Cor-S}(b)  and \eqref{sol-alt} imply
\begin{equation}
\label{2-m}
P(t, B) = f(T_0,x;t,B) \ge \bar{P}(T_0,x;t,B) = \bar{P}(t, B), \qquad t \in ]T_0,T_1[, B \in \B({\bf X}).
\end{equation}
%Thus, the function $\bar{P}(t,B)$ is the minimal non-negative function satisfying the conditions stated in the first sentence of statement (b) of this corollary.

To show the uniqueness property, let the function $P$  take values in $[0,1]$. This fact and the property that the function $\bar{P}(u,z;t,B)$ takes values in $[0,1]$ for all $u,$ $z,$ $t,$ $B$ in the domain of $\bar{P}$ imply that the function $f$ defined in \eqref{sol-alt} takes values in $[0,1]$.
 Observe that ${\bf X}$ is a $q$-bounded set if the  function $q(z,t)$ is bounded on the set ${\bf X} \times [T_0, T_1[.$ Then, as follows from Corollary~\ref{Cor-S}(a),  $\bar{P}(u,z;t,{\bf X}) = 1$ for all $u \in [T_0, T_1[,$ $t \in ]u,T_1[,$  and $z \in {\bf X}$. Therefore, it follows from Corollary~\ref{Cor-S}(b) that $f(u,z;t,B) = \bar{P}(u,z;t,B)$ for all $u,z,t,B$ in the domain of $\bar{P}$, which along with \eqref{2-m} implies the uniqueness property of $\bar{P}(t,B).$ \qed
\end{proof}

\begin{acknowledgement}
The first two authors thank Pavlo Kasyanov for useful comments.
\end{acknowledgement}

\end{document}